\theoremstyle{plain}
\newtheorem{theorem}{Theorem}
\newtheorem{lemma}{Lemma}
\newtheorem{corollary}{Corollary}
\theoremstyle{definition}
\newtheorem{example}{Example}
\newtheorem{definition}{Definition}
\theoremstyle{remark}
\begin{document}
%\input{bop-hax.tex}
%\mirror
\afterpage{\rhead[]{\thepage} \chead[\small D.I. Pushkashu       %%%%%%%%% complete
]{\small  Groupoids which satisfy certain  associative laws} \lhead[\thepage]{} }                  %%%%%%%%% complete

\begin{center}
\vspace*{2pt}
{\Large \textbf{Groupoids which satisfy certain  associative laws}}\\[3mm]
{\large \textsf{\emph{Dmitry I. Pushkashu }}}\\
\vskip36pt
\textbf{Abstract}
\end{center}
{\footnotesize In this paper we study  properties of left (right) division (cancellative)  groupoids with associative-like  identities, namely, with cyclic associative identity ($x\cdot(y \cdot z) = (z\cdot x) \cdot y$) and  Tarki  ($x \cdot (z\cdot y) = (x\cdot y) \cdot z$ ) identities. }

\footnote{\textsf{2010 Mathematics Subject Classification:} 20N02, 20N05
}

\footnote{\textsf{Keywords:} groupoid, right division groupoid, left division groupoid, right cancellative groupoid, left cancellative groupoid,  cyclic associative identity, quasigroup, abelian group}

\section*{\centerline{Introduction}}

\subsection*{\centerline{Preliminaries}}

\begin{definition} \label{GROU}
{A binary groupoid $(Q, A)$ is understood to be a non-empty set $Q$ together with a binary operation $A$.}
\end{definition}

Let $(Q, \cdot)$ be a groupoid.
The associative law stats that
\begin{equation}x \cdot (y \cdot z) = (x \cdot y) \cdot z \label{(5)}\end{equation}
holds for arbitrary elements $x, y, z \in Q$.

By interchanging the order of the neighboring "factors" in some of the "multiplications" figuring in (\ref{(5)}) it is possible to get 16 equations \cite{HOSSUZU}. In \cite{HOSSUZU} M. Hosszu   reduces these 16 equations to one of the following four equations:
\begin{equation} x\cdot(y \cdot z) = (x\cdot y) \cdot z \label{(6)}\end{equation}
\begin{equation} x\cdot(y \cdot z) = z\cdot(y \cdot x) \; (Grassmann's \; associative \; law) \label{(7)}\end{equation}
\begin{equation} x\cdot(y \cdot z) = y\cdot(x \cdot z) \label{(8)}\end{equation}
\begin{equation} x\cdot(y \cdot z) = (z\cdot x) \cdot y \, (cyclic \; associative \; law) \label{(ASS_CIRCLIC)}\end{equation}

Unfortunately in his article Hosszu gives only two  examples of such reduction of some from these 16  equations to the equations (\ref{(6)})--(\ref{(ASS_CIRCLIC)}) \cite{HOSSUZU}.
\begin{example} The equation $(y\cdot z) \cdot x = (y\cdot x) \cdot z$ is equivalent with $x\ast(z \ast y) = z\ast(x \ast y)$
i.e. with (\ref{(8)}) by introducing the notation $t \ast s = s \cdot t$ \cite{HOSSUZU}.
\end{example}
\begin{example} If we put in the equation
\begin{equation} x \cdot (z\cdot y) = (x\cdot y) \cdot z \, \, (Tarki's \,\, associative\,\,  law) \label{(ASS_TARKI)}\end{equation}
$z = x$ and denote $t = x \cdot y$, we see that the operation satisfies the commutative law: $x \cdot t = t \cdot x$. Hence
 equation (\ref{(ASS_TARKI)}) implies each of the equations (\ref{(6)})-- (\ref{(ASS_CIRCLIC)}), under the only supposition that the set of the elements
$t = x \cdot y$ $(x,y \in Q)$ contains every element of $Q$ \cite{HOSSUZU}.
\end{example}
Notice last condition is true in a left (right) division groupoid.

M. A. Kazim and M. Naseeruddin in \cite{KAZIM} considered the following laws:
\begin{equation} \label{AG}
(x \cdot y) \cdot z = (z \cdot y) \cdot x
\end{equation}
and
\begin{equation} \label{GRAS}
x\cdot(y \cdot z) = z\cdot(y \cdot x)
\end{equation}
As we can see equation (\ref{GRAS}) coincides with  Grassmann's associative  law (\ref{(8)}).
In \cite{KAZIM} identity  (\ref{AG}) is called the left invertible law. Abel-Grassmann groupoids are studied quit intensively \cite{PROT_STEP}.

A groupoid satisfying identity  (\ref{AG}) is called a left almost semigroup and is abbreviated as LA-semigroup \cite{KAZIM}.
Identity $x\cdot(y \cdot z) = z\cdot(y \cdot x)$ (\ref{(7)}) is called right invertible law in \cite{KAZIM}.

A groupoid satisfying the right invertible law (\ref{(7)}) is called a right almost semigroup and is
abbreviated as RA-semigroup \cite{KAZIM}.

\subsection*{\centerline{Definitions}}

Let  $(Q,\cdot)$ be a groupoid.  As usual, the map $L_{a}: Q\rightarrow Q, L_{a}x = a\cdot x$ for all $x\in Q$,
is a left translation of the groupoid  $(Q,\cdot)$ relatively a fixed element $a\in Q$, the map  $R_{a}:
Q\rightarrow Q $, $R_{a} x = x\cdot a$,  is a right translation.
 We give  the following definitions \cite{21,
JKN, VD, 1a, HOP}.

\begin{definition}
{A groupoid $(Q,\cdot)$ is called a left cancelation groupoid, if the following implication fulfilled:
$a\cdot x = a \cdot y \Rightarrow x=y$ for all $a, x, y \in Q$, i.e. translation $L_a$ is an injective map for
any $a\in Q$.}
\end{definition}

\begin{example} \label{EXAMPLE_CANCEL_LEFT} Let   $x\circ y = 1\cdot x + 3\cdot y$ for all  $x, y \in {\mathbb Z} $,
where  $ ({\mathbb Z}, +, \cdot) $ is the ring of integers. It is possible to check that  $ ({\mathbb Z}, \circ)
$ is a left cancelation groupoid.
\end{example}

\begin{definition}
A groupoid $(Q,\cdot)$ is called right  cancelation, if the following implication fulfilled: $x\cdot a = y
\cdot a \Rightarrow x=y$ for all $a, x, y \in G$, i.e. translation $R_a$ is an injective map for any $a\in Q$.
\end{definition}

\begin{definition}
A groupoid $(Q,\cdot)$ is called  a cancelation groupoid, if it is a left and a right cancelation groupoid.
\end{definition}

\begin{definition}
A groupoid $(Q,\cdot)$ is said to be a left (right) division groupoid if the mapping $L_x$ ( $R_x$ ) is
surjective for every $x\in Q$.
\end{definition}

\begin{example} \label{EXAMPLE_CANCEL_LEFT_DIVIZ_RIGHT} Let   $x\circ y = \left[ x/2 \right]  + 3\cdot y$ for all  $x, y \in {\mathbb Z} $,
where  $ ({\mathbb Z}, +, \cdot) $ is the ring of integers. It is possible to check that  $ ({\mathbb Z}, \circ)
$ is  left cancelation right division groupoid.
\end{example}

\begin{definition}
A groupoid $(Q,\cdot)$ is said to be a division groupoid  if it is simultaneously a left and right division
groupoid.
\end{definition}

\begin{definition}
An element $f$ of a groupoid  $(Q,\cdot)$ is called a \textit{left  identity element}, if $f\cdot x = x$ for all $x\in Q$.
An element $e$ of a groupoid  $(Q,\cdot)$ is called a \textit{right  identity element}, if $x\cdot e = x$ for all $x\in Q$.
An element $e$ of a groupoid  $(Q,\cdot)$ is called a \textit{identity element}, if $x\cdot e = x = e\cdot x$ for all $x\in Q$.
\end{definition}

\begin{definition} \label{identity_el_of_groupoid}
A groupoid $(Q,\circ )$ is called a \textit{right  quasigroup} (a \textit{left  quasigroup}) if, for all $a,b
\in Q$, there exists a unique solution $x \in Q$ to the equation $x\circ a = b$ ($a\circ x = b$), i.e. in this
case any right (left) translation of the groupoid $(Q,\circ )$ is a bijective map  of the set $Q$.
\end{definition}

\begin{definition} \label{def2}
A left and right quasigroup is called a \textit{quasigroup}.
\end{definition}

\begin{definition} \label{defLoop2}
A  quasigroup with identity element is called a \textit{loop}.
\end{definition}

In this paper an algebra (or algebraic structure) is a set $A$ together with a collection of operations on $A$.
T. Evans \cite{EVANS_51}  defined  a binary quasigroup as an algebra $(Q, \cdot, /, \backslash)$ with three
binary operations. He has defined the following identities:
\begin{equation} x\cdot(x \backslash y) = y \label{(1)}\end{equation}
\begin{equation}(y / x)\cdot x = y \label{(2)}\end{equation}
\begin{equation}x\backslash (x \cdot y) = y \label{(3)}\end{equation}
\begin{equation}(y \cdot x)/ x = y \label{(4)}\end{equation}

\begin{definition}    An algebra $(Q, \cdot, \backslash, /)$ with identities (\ref{(1)})-- (\ref{(4)}) is called a \textit{quasigroup} \cite{EVANS_51, BIRKHOFF, BURRIS, VD, 1a, HOP}. \label{def3}
\end{definition}

In Mal'tsev terminology  an algebra $(Q, \cdot, \backslash, /)$ with identities (\ref{(1)})-- (\ref{(4)}) is called a \textit{primitive quasigroup} \cite{MALTSEV}.

\begin{definition}    A quasigroup  $(Q, \cdot, \backslash, /)$ with identity $x\backslash x = y\slash y$ is called a \textit{loop} \cite{EVANS_51, BIRKHOFF, BURRIS, VD, 1a, HOP}. \label{def315}
\end{definition}

 A.I. Mal'tsev  named a primitive quasigroup  $(Q, \cdot, \backslash, /)$ with identity $x\backslash x = y\slash y$ a \textit{primitive loop} \cite{MALTSEV}.

 It is proved that definitions of a quasigroup and a primitive quasigroup,  a loop and a primitive loop are equivalent in pairs  \cite{MALTSEV}.

\begin{theorem} \label{ONE_IDEN_LEFT_DIVIS}  \cite{SCERB_07, SCERB_TAB_PUSH_09}
\begin{enumerate}
    \item A groupoid $(Q, \cdot )$ is  a left division groupoid if and only if
 there exists a left cancelation groupoid $(Q, \backslash)$ such that in algebra $(Q, \cdot, \backslash)$
identity  (\ref{(1)}) is fulfilled.
\item A groupoid $(Q, \cdot)$ is a right division groupoid if and only if
there exists a right cancelation groupoid $(Q, \slash)$ such that in algebra   $(Q, \cdot, \slash )$ identity
(\ref{(2)}) is fulfilled.
    \item  A groupoid $(Q, \cdot )$ is  a left cancelation  groupoid if and only if
 there exists a left division groupoid $(Q, \backslash)$ such that in algebra $(Q, \cdot, \backslash)$ identity
(\ref{(3)}) is fulfilled.
    \item  A groupoid $(Q, \cdot )$ is  a right  cancelation  groupoid if and only if
 there exists a right  division groupoid $(Q, \slash)$ such that in algebra $(Q, \cdot, \slash)$ identity
(\ref{(4)}) is fulfilled.
    \end{enumerate}
\end{theorem}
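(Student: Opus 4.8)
The plan is to translate every assertion into a statement about left or right translations and to reduce the whole theorem to the elementary set-theoretic fact that a self-map $f$ of a non-empty set $Q$ is surjective if and only if it has a right inverse $g$ (a map with $fg = \mathrm{id}_Q$, which is then automatically injective), and injective if and only if it has a left inverse $g$ (a map with $gf = \mathrm{id}_Q$, which is then automatically surjective). Writing $L^{\cdot}_x$, $L^{\backslash}_x$, $R^{\cdot}_x$, $R^{/}_x$ for the left and right translations of the operations involved, each of the four Evans identities becomes a one-line relation between translations: (\ref{(1)}) says $L^{\cdot}_x L^{\backslash}_x = \mathrm{id}_Q$, (\ref{(2)}) says $R^{\cdot}_x R^{/}_x = \mathrm{id}_Q$, (\ref{(3)}) says $L^{\backslash}_x L^{\cdot}_x = \mathrm{id}_Q$, and (\ref{(4)}) says $R^{/}_x R^{\cdot}_x = \mathrm{id}_Q$, for every $x \in Q$. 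Since parts 2 and 4 are obtained from parts 1 and 3 by replacing left translations with right translations throughout, I would prove parts 1 and 3 in full and merely indicate that the identical computations yield the other two.

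For part 1 I would argue as follows. Suppose first that $(Q, \cdot)$ is a left division groupoid, so each $L^{\cdot}_x$ is surjective. For every $x$ I choose a right inverse $L^{\backslash}_x$ of $L^{\cdot}_x$ and define $x \backslash y := L^{\backslash}_x(y)$; then (\ref{(1)}) holds by construction, and each $L^{\backslash}_x$, possessing the left inverse $L^{\cdot}_x$, is injective, so $(Q, \backslash)$ is a left cancelation groupoid. Conversely, if a left cancelation groupoid $(Q, \backslash)$ satisfies (\ref{(1)}), then the relation $L^{\cdot}_x L^{\backslash}_x = \mathrm{id}_Q$ forces each $L^{\cdot}_x$ to be surjective, so $(Q, \cdot)$ is a left division groupoid.

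For part 3 I would proceed dually. Suppose first that $(Q, \cdot)$ is a left cancelation groupoid, so each $L^{\cdot}_x$ is injective. Since $Q$ is non-empty, each $L^{\cdot}_x$ admits a left inverse $L^{\backslash}_x$; defining $x \backslash y := L^{\backslash}_x(y)$ makes (\ref{(3)}) hold, and each $L^{\backslash}_x$, possessing the right inverse $L^{\cdot}_x$, is surjective, so $(Q, \backslash)$ is a left division groupoid. Conversely, if a left division groupoid $(Q, \backslash)$ satisfies (\ref{(3)}), then $L^{\backslash}_x L^{\cdot}_x = \mathrm{id}_Q$ forces each $L^{\cdot}_x$ to be injective, i.e.\ $(Q, \cdot)$ is a left cancelation groupoid. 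Parts 2 and 4 then follow verbatim, using $R^{\cdot}_x$ in place of $L^{\cdot}_x$ together with the relations $R^{\cdot}_x R^{/}_x = \mathrm{id}_Q$ and $R^{/}_x R^{\cdot}_x = \mathrm{id}_Q$.

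The only point that needs care is the construction of the auxiliary operation in the two forward directions. There I must select a one-sided inverse $L^{\backslash}_x$ (or $R^{/}_x$) for every $x$ simultaneously, which is a use of the axiom of choice; moreover, in the injective case the equation $L^{\backslash}_x L^{\cdot}_x = \mathrm{id}_Q$ pins $L^{\backslash}_x$ down only on the image of $L^{\cdot}_x$, so to obtain a genuine total binary operation I must assign the remaining values of $L^{\backslash}_x$ arbitrarily. This freedom is harmless, since surjectivity of $L^{\backslash}_x$ is already guaranteed by $L^{\backslash}_x L^{\cdot}_x = \mathrm{id}_Q$, and no further property of $\backslash$ is asserted by the theorem.
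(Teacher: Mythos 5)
Your proof is correct. Note that the paper itself gives no proof of this theorem --- it is imported verbatim from the cited references \cite{SCERB_07, SCERB_TAB_PUSH_09} --- so there is nothing in the text to compare against; your reduction of each Evans identity to a one-sided-inverse relation between translations ($L^{\cdot}_x L^{\backslash}_x = \mathrm{id}_Q$, etc.) is the standard argument for this result, and you correctly flag the two genuine subtleties: the use of choice in selecting the inverses uniformly in $x$, and the arbitrary extension of the partial inverse off the image of $L^{\cdot}_x$ in the cancelation-to-division direction.
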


Garret Birkhoff uses the following identities
 \begin{equation}(x / y)\backslash x = y \label{q3}\end{equation}
\begin{equation} y / (x \backslash y) = x \label{q6} \end{equation}

He gives \cite{BIRKHOFF} the following definition of quasigroup $(Q, \cdot, /, \backslash)$.
\begin{definition} \label{(QUASI_MAIN)} An algebra $(Q, \cdot, \backslash, /)$ with identities (\ref{(1)})--(\ref{q6}) is called a \textit{quasigroup}.
\end{definition}

\begin{lemma} \label{left cancellation right division}
In any right division left cancelation  groupoid $(Q, \cdot, /, \backslash)$ is true identity (\ref{q3}) \cite{SPS_10}.
\end{lemma}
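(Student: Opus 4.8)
The plan is to reduce identity (\ref{q3}) to the two defining identities (\ref{(2)}) and (\ref{(3)}) that the division operations already satisfy for groupoids of this class, and then to carry out a single substitution. Since $(Q,\cdot)$ is a right division groupoid, part 2 of Theorem~\ref{ONE_IDEN_LEFT_DIVIS} guarantees that $/$ is a right cancelation operation for which the right inverse identity (\ref{(2)}), namely $(y/x)\cdot x = y$, holds. Since $(Q,\cdot)$ is left cancellative, part 3 of the same theorem guarantees that $\backslash$ is a left division operation satisfying (\ref{(3)}), namely $x\backslash(x\cdot y)=y$. These are the only two facts I intend to use.

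The key step is one chained rewriting. First I would put $u = x/y$ and apply (\ref{(2)}) to obtain $u\cdot y = (x/y)\cdot y = x$. Substituting this expression for $x$ into the left-hand side of (\ref{q3}) gives
\[
(x/y)\backslash x \;=\; u\backslash(u\cdot y).
\]
Now identity (\ref{(3)}), read with $u$ playing the role of the cancelled left factor, collapses the right-hand side to $y$, which is precisely identity (\ref{q3}).

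I expect no genuine obstacle here beyond correctly matching the bound variables: the substance of the lemma is simply that the right inverse property and the left cancelation property chain together, since $(x/y)\cdot y$ reproduces $x$ and then $\backslash$ strips the factor $(x/y)$ back off. The only point deserving a moment's care is to apply the instance of (\ref{(3)}) with the compound term $u=x/y$ in place of the left factor, rather than with $x$ itself, so that the inner product $u\cdot y$ genuinely fits the pattern $u\backslash(u\cdot y)$ demanded by (\ref{(3)}).
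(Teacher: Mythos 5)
Your proof is correct and is the natural argument: by identity (\ref{(2)}) one has $(x/y)\cdot y = x$, and applying identity (\ref{(3)}) with the compound term $x/y$ as the cancelled left factor gives $(x/y)\backslash x = (x/y)\backslash\bigl((x/y)\cdot y\bigr) = y$, which is exactly (\ref{q3}). The paper states this lemma without giving its own proof (it only cites \cite{SPS_10}), so there is no argument to compare against, but your two-step chaining of (\ref{(2)}) and (\ref{(3)}) via Theorem~\ref{ONE_IDEN_LEFT_DIVIS} is precisely the intended derivation and has no gaps.
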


\begin{lemma} \label{right cancellation left  division}
In any left  division right cancelation  groupoid $(Q, \cdot, /, \backslash)$ is true identity (\ref{q6}) \cite{SPS_10}.
\end{lemma}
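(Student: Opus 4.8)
The plan is to obtain identity (\ref{q6}) by combining the two parts of Theorem \ref{ONE_IDEN_LEFT_DIVIS} that match the hypotheses, followed by a single substitution. Since $(Q,\cdot)$ is a left division groupoid, part 1 of Theorem \ref{ONE_IDEN_LEFT_DIVIS} guarantees that the companion operation $\backslash$ satisfies identity (\ref{(1)}), namely $x\cdot(x\backslash y)=y$ for all $x,y\in Q$. Since $(Q,\cdot)$ is simultaneously right cancellative, part 4 of the same theorem guarantees that the operation $/$ satisfies identity (\ref{(4)}), namely $(y\cdot x)/x=y$ for all $x,y\in Q$.

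The key step is to feed the first identity into the second. First I would abbreviate $u=x\backslash y$; then (\ref{(1)}) reads $x\cdot u=y$. Next I would instantiate (\ref{(4)}) at the pair $(x,u)$, which is legitimate because (\ref{(4)}) is a genuine identity holding for all arguments, obtaining $(x\cdot u)/u=x$. Replacing $x\cdot u$ by $y$ and $u$ by $x\backslash y$ turns the left-hand side into $y/(x\backslash y)$, so the equation becomes $y/(x\backslash y)=x$, which is exactly (\ref{q6}).

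There is no substantial obstacle here: the whole argument is a two-line substitution once the correct parts of Theorem \ref{ONE_IDEN_LEFT_DIVIS} are invoked. The only point that requires care is matching the variables correctly — one must instantiate the ``outer'' divisor in (\ref{(4)}) by the element $x\backslash y$ produced by (\ref{(1)}), rather than the other way round; reversing the roles would instead reproduce the companion identity (\ref{q3}) of Lemma \ref{left cancellation right division}. For completeness I would also confirm that both $x\backslash y$ and $y/(x\backslash y)$ are defined for all $x,y$, which holds because left division makes $L_x$ surjective, so that $\backslash$ is total, while $/$ is by part 4 a total binary operation on $Q$.
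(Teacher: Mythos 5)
Your proof is correct: writing $u=x\backslash y$, identity (\ref{(1)}) gives $x\cdot u=y$, and instantiating identity (\ref{(4)}) at the pair $(x,u)$ gives $(x\cdot u)/u=x$, hence $y/(x\backslash y)=x$, which is exactly (\ref{q6}). The paper states this lemma with only a citation to \cite{SPS_10} and supplies no proof of its own, so there is nothing to compare against; your two-line substitution, correctly pairing part~1 (left division) with part~4 (right cancellation) of Theorem~\ref{ONE_IDEN_LEFT_DIVIS}, is the natural and complete argument.
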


\begin{corollary}
Definitions \ref{(QUASI_MAIN)} and \ref{def3} are equivalent.
\end{corollary}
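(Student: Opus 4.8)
The plan is to prove the two inclusions separately. Since Definition~\ref{(QUASI_MAIN)} requires identities (\ref{(1)})--(\ref{q6}), which in particular include Evans' identities (\ref{(1)})--(\ref{(4)}) of Definition~\ref{def3}, one direction is immediate: every algebra $(Q,\cdot,\backslash,/)$ satisfying Birkhoff's definition automatically satisfies Evans' definition. The entire content of the corollary therefore lies in the converse, namely in showing that identities (\ref{(1)})--(\ref{(4)}) already force the two additional identities (\ref{q3}) and (\ref{q6}).

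To this end I would first read off from (\ref{(1)})--(\ref{(4)}) the four structural properties of the groupoid $(Q,\cdot)$. Identity (\ref{(1)}), $x\cdot(x\backslash y)=y$, exhibits every $y$ as $L_x(x\backslash y)$ and hence makes each $L_x$ surjective, so $(Q,\cdot)$ is a left division groupoid; dually, (\ref{(2)}) makes each $R_x$ surjective, so $(Q,\cdot)$ is a right division groupoid. Identity (\ref{(3)}), $x\backslash(x\cdot y)=y$, shows that $L_x$ is injective, so $(Q,\cdot)$ is left cancellative, and (\ref{(4)}) analogously shows that it is right cancellative. In passing, combining (\ref{(1)}) with (\ref{(3)}), and (\ref{(2)}) with (\ref{(4)}), identifies $\backslash$ and $/$ with the inverse maps of $L_x$ and $R_x$, so the two division operations supplied by Evans' axioms are precisely the canonical ones referred to in the lemmas.

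With these properties in hand, the two target identities follow by direct appeal to the preceding lemmas. Since $(Q,\cdot)$ is right division (by (\ref{(2)})) and left cancellative (by (\ref{(3)})), Lemma~\ref{left cancellation right division} yields identity (\ref{q3}). Since $(Q,\cdot)$ is left division (by (\ref{(1)})) and right cancellative (by (\ref{(4)})), Lemma~\ref{right cancellation left division} yields identity (\ref{q6}). Thus all of (\ref{(1)})--(\ref{q6}) hold, and an algebra satisfies Evans' definition exactly when it satisfies Birkhoff's, so the two definitions coincide.

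I expect no serious obstacle here. The only point requiring care is the bookkeeping of which identity produces which one-sided division or cancellation property, together with the accompanying check that the operations $\backslash$ and $/$ appearing in the lemmas really are the ones prescribed by Evans' axioms rather than some other choice of inverse operations. Once that matching is made explicit, the result is essentially a one-line consequence of Lemmas~\ref{left cancellation right division} and \ref{right cancellation left division}.
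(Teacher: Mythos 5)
Your proposal is correct and follows essentially the same route as the paper: the paper likewise observes that only the implication from Evans' identities (\ref{(1)})--(\ref{(4)}) to (\ref{q3}) and (\ref{q6}) needs proof, and obtains it by invoking Lemmas \ref{left cancellation right division} and \ref{right cancellation left  division}. Your additional bookkeeping, identifying which of (\ref{(1)})--(\ref{(4)}) supplies each division and cancellation hypothesis, is a helpful elaboration of the same argument rather than a different one.
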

\begin{proof}
For this aim we should only to prove that identities  (\ref{q3}) and (\ref{q6}) follow from identities
(\ref{(1)})--(\ref{(4)}).
This fact follows from  Lemmas \ref{left cancellation right division} and \ref{right cancellation left  division}.
\end{proof}

By the proving of many results given in this paper we have used Prover 9-Mace 4 \cite{MAC_CUNE_PROV}.

\section*{\centerline{Cyclic associative law}}

In this section we study various division and cancelation groupoids with cyclic associative law.

\subsection*{\centerline{Right division right cancelation groupoid}}

\begin{lemma} \label{CYCL_5}
If a right division groupoid $(Q, \cdot , /)$ satisfies the cyclic associative  law (\ref{(ASS_CIRCLIC)}),
then then it satisfies associative identity (\ref{(5)}).
\end{lemma}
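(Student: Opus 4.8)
The plan is to show that, in a right division groupoid, the cyclic law (\ref{(ASS_CIRCLIC)}) already forces commutativity, from which the associative law (\ref{(5)}) is immediate. First I would rewrite (\ref{(ASS_CIRCLIC)}) in the equivalent left–bracketed form $(a\cdot b)\cdot c = b\cdot(c\cdot a)$, obtained by the substitution $x=b,\ y=c,\ z=a$. Reading (\ref{(ASS_CIRCLIC)}) directly gives $a\cdot(b\cdot c)=(c\cdot a)\cdot b$, so the two sides of (\ref{(5)}) become $(a\cdot b)\cdot c=b\cdot(c\cdot a)$ and $a\cdot(b\cdot c)=(c\cdot a)\cdot b$. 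Hence (\ref{(5)}) is equivalent to $b\cdot(c\cdot a)=(c\cdot a)\cdot b$ for all $a,b,c$. Since the groupoid is a right division groupoid, $R_a$ is surjective, so $c\cdot a$ runs through all of $Q$ as $c$ does; therefore proving this identity is exactly proving commutativity $b\cdot w=w\cdot b$ for all $b,w\in Q$. Thus it suffices to show that $(Q,\cdot)$ is commutative.

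To reach commutativity I would bring in the division operation. By Theorem~\ref{ONE_IDEN_LEFT_DIVIS}\,(2) the operation $/$ satisfies (\ref{(2)}), $(y/x)\cdot x=y$; putting $x=y$ and writing $e_y:=y/y$ produces a local left unit, $e_y\cdot y=y$. Feeding the relation $e_y\cdot y=y$ into the two forms of the cyclic law yields two bridging identities: inserting it into (\ref{(ASS_CIRCLIC)}) gives $a\cdot y=(y\cdot a)\cdot e_y$, and inserting it into $(a\cdot b)\cdot c=b\cdot(c\cdot a)$ gives $y\cdot c=y\cdot(c\cdot e_y)$. A useful auxiliary device is a \emph{cancellation–by–covering} step: from any equality $y\cdot\alpha=y\cdot\beta$, right–multiplying by an arbitrary $a$ and using $(P\cdot Q)\cdot R=Q\cdot(R\cdot P)$ gives $\alpha\cdot(a\cdot y)=\beta\cdot(a\cdot y)$, whence $\alpha\cdot w=\beta\cdot w$ for all $w$ because $a\cdot y$ covers $Q$; left–multiplying the result by $t$ and applying (\ref{(ASS_CIRCLIC)}) gives $(w\cdot t)\cdot\alpha=(w\cdot t)\cdot\beta$, whence $u\cdot\alpha=u\cdot\beta$ for all $u$, again by surjectivity of a right translation.

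The decisive step, and the one I expect to be the main obstacle, is to upgrade the local left units $e_y$ to a single two–sided identity $e$. Applying the covering device to the bridge $y\cdot(c\cdot e_y)=y\cdot c$ propagates the neutrality of $e_y$ from one factor position to another, and the aim is to conclude that all the $e_y$ coincide with one element $e$ that is simultaneously left– and right–neutral; the delicate point is the final passage from ``equal left (respectively right) translations'' to ``equal elements,'' which is exactly where the section property (\ref{(2)}) of right division must be used in an essential, non–formal way, and is the natural place for the Prover~9 search mentioned in the Preliminaries. Once a two–sided identity $e$ is available, setting $x=e$ in (\ref{(ASS_CIRCLIC)}) gives $y\cdot z=(z\cdot e)\cdot y=z\cdot y$, i.e. commutativity, and then the reduction of the first paragraph delivers the associative law (\ref{(5)}).
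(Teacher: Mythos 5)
Your opening reduction is correct and even elegant: writing the cyclic law in its two bracketings, $a\cdot(b\cdot c)=(c\cdot a)\cdot b$ and $(a\cdot b)\cdot c=b\cdot(c\cdot a)$, and using surjectivity of $R_a$, you show that (\ref{(5)}) is equivalent to commutativity. The covering device in your second paragraph is also sound. The trouble is that the argument then stops exactly where the lemma lives: commutativity (equivalently, associativity) is never actually derived. Your route to it passes through manufacturing a single two-sided identity element out of the local units $e_y=y/y$, and you yourself flag the passage from ``equal translations'' to ``equal elements'' as the main obstacle and defer it to a Prover~9 search. That passage is genuinely blocked as you have set it up: from the bridge $y\cdot(c\cdot e_y)=y\cdot c$ your device yields $u\cdot(c\cdot e_y)=u\cdot c$ and $(c\cdot e_y)\cdot w=c\cdot w$ for all $u,w$, but a right division groupoid carries no cancellation, so you cannot pass from equal translations to $c\cdot e_y=c$; the identity (\ref{(4)}) you would need to ``divide off'' $w$ is only available under the additional right cancelation hypothesis, and indeed the paper only produces an identity element in that setting (Lemmas \ref{CYCL_11} and \ref{CYCL_12}). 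As written, this is a plan with its decisive step missing, not a proof.

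The missing idea is that no identity element is needed. The paper works entirely by substituting $x/z$ into the two bracketings of (\ref{(ASS_CIRCLIC)}) and simplifying with $(y/x)\cdot x=y$ (identity (\ref{(2)})): this produces the pair of identities $x\cdot\bigl(y\cdot(z/x)\bigr)=z\cdot y$ and $(x\cdot y)\cdot(z/x)=y\cdot z$; replacing $y$ by $x\cdot y$ in the first and evaluating its inner factor by the second gives $x\cdot(y\cdot z)=z\cdot(x\cdot y)$, which together with $x\cdot(y\cdot z)=(z\cdot x)\cdot y$ yields $(z\cdot x)\cdot y=z\cdot(x\cdot y)$ at once. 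If you want to keep your reduction to commutativity, the same two $/$-identities deliver it almost as quickly once associativity is known (that is the paper's Lemma \ref{CYCL_6}); either way the work must be done by these explicit substitutions involving $/$, not by an identity element that the hypotheses of this lemma do not let you construct.
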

\begin{proof}
From (\ref{(ASS_CIRCLIC)}) renaming  variables as $x \rightarrow y$, $y \rightarrow z$ and $z \rightarrow x$,
we obtain
\begin{equation} \label{150}
(x \cdot y) \cdot z = y \cdot (z \cdot x)
\end{equation}
In (\ref{150}) renaming variables as $x \rightarrow x / z$, $z \rightarrow y$ and $y \rightarrow z$, we obtain
\begin{equation}
((x / z) \cdot z) \cdot y = z \cdot (y \cdot ( x /z))
\end{equation}
Taken in consideration (\ref{(2)}) we get
\begin{equation} \label{151}
x \cdot y = z \cdot (y \cdot (x / z))
\end{equation}
In (\ref{151}) renaming variables as $x \rightarrow z$, $z \rightarrow x$ and swap left and right sides of equation, we obtain
\begin{equation} \label{152}
x \cdot (y \cdot (z / x)) = z \cdot y
\end{equation}
In (\ref{150}) renaming variable $z \rightarrow z / x$, we obtain
\begin{equation} \label{153}
(x \cdot y) \cdot (z / x) = y \cdot ((z / x) \cdot x) \overset{(\ref{(2)})}{=} y \cdot z
\end{equation}
From (\ref{152}) and (\ref{153}) we obtain
\begin{equation} \label{154}
x \cdot (y \cdot z) = z \cdot (x \cdot y)
\end{equation}
From (\ref{154}) and (\ref{(ASS_CIRCLIC)}) we immediately get
\begin{equation}
(z \cdot x) \cdot y = z \cdot (x \cdot y)
\end{equation}

\end{proof}

\begin{lemma} \label{CYCL_6}
If a right division groupoid $(Q, \cdot , /)$ satisfies the cyclic associative  law (\ref{(ASS_CIRCLIC)}), then it is commutative.
\end{lemma}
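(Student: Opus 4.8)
The plan is to build directly on Lemma~\ref{CYCL_5} and its proof rather than to restart from the cyclic law. The proof of Lemma~\ref{CYCL_5} already yields two facts I want to reuse: that $(Q,\cdot)$ is associative (identity~(\ref{(5)})), and the auxiliary identity~(\ref{154}), namely $x\cdot(y\cdot z)=z\cdot(x\cdot y)$. Combining these two should essentially give commutativity, once I observe that in a right division groupoid every element arises as a product.

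First I would rewrite the left-hand side of~(\ref{154}) using associativity, turning $x\cdot(y\cdot z)$ into $(x\cdot y)\cdot z$. This gives $(x\cdot y)\cdot z = z\cdot(x\cdot y)$ for all $x,y,z\in Q$. Writing $t=x\cdot y$, this says $t\cdot z = z\cdot t$ whenever $t$ happens to be a product of two elements of $Q$.

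The key remaining step is to note that in a right division groupoid the set $\{x\cdot y : x,y\in Q\}$ is the whole of $Q$: since $R_a$ is surjective for each $a$, every $b\in Q$ can be written as $b = x\cdot a$ for a suitable $x$, hence is a product. This is precisely the ``only supposition'' flagged immediately after Hosszu's second example. Consequently $t$ ranges over all of $Q$, and $t\cdot z = z\cdot t$ holds for arbitrary $t,z\in Q$, i.e.\ $(Q,\cdot)$ is commutative.

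I expect the only genuine subtlety to be making sure that surjectivity of the right translations really forces \emph{every} element (not merely those in some proper subset) to be expressible as a product; once that observation is secured the conclusion is immediate. A more self-contained variant could avoid citing~(\ref{154}) by re-deriving it from~(\ref{(ASS_CIRCLIC)}) and~(\ref{(2)}), but reusing Lemma~\ref{CYCL_5} keeps the argument shortest.
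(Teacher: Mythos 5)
Your argument is correct and is essentially the paper's own: the paper likewise derives $x\cdot(y\cdot z)=z\cdot(x\cdot y)$ (its identity (164), the same as the (154) you reuse) and then obtains commutativity by substituting $x\to x/y$ and using $(x/y)\cdot y=x$, which is exactly your observation that right division makes every element of $Q$ a product. The only cosmetic difference is that you phrase this last step via surjectivity of the right translations while the paper realizes it through the explicit substitution; both are sound.
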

\begin{proof}
From (\ref{(ASS_CIRCLIC)}) denote  variables as $x \rightarrow y$, $y \rightarrow z$
and $z \rightarrow x$,
we obtain
\begin{equation} \label{160}
(x \cdot y) \cdot z = y \cdot (z \cdot x)
\end{equation}
Rewrite (\ref{(5)}) in the following form
\begin{equation} \label{162}
(x \cdot y) \cdot z = x \cdot (y \cdot z)
\end{equation}
From (\ref{160}) and (\ref{162}) we get
\begin{equation} \label{163}
x \cdot (y \cdot z) = y \cdot (z \cdot x)
\end{equation}
After renaming variable as $y \rightarrow x$, $z \rightarrow y$ and $x \rightarrow z$,
we swap left and right sides of equality, we obtain
\begin{equation} \label{164}
x \cdot (y \cdot z) = z \cdot (x \cdot y)
\end{equation}
We make the following renaming variable in (\ref{162}) $x \rightarrow x / z$, $y \rightarrow z$
and $z \rightarrow y$, we get
\begin{equation} \label{165}
((x /z) \cdot z) \cdot y \overset{(\ref{(2)})}{=} x \cdot y = (x / z) \cdot (z \cdot y)
\end{equation}
In (\ref{165}) we renamed variables as $z \rightarrow y$ and $y \rightarrow z$,
we get
\begin{equation} \label{166}
(x / y) \cdot (y \cdot z) =  x \cdot z
\end{equation}
In (\ref{164}) renaming variable as $x \rightarrow x /y$
\begin{equation} \label{167}
(x / y) \cdot (y \cdot z) = z \cdot ((x / y) \cdot y) \overset{(\ref{(2)})}{=} z \cdot x
\end{equation}
From (\ref{166}) and (\ref{167}) we immediately obtain
\begin{equation}
x \cdot z = z \cdot x
\end{equation}
\end{proof}

\begin{lemma} \label{CYCL_11}
If a right division right cancelation groupoid $(Q, \cdot , /)$ satisfies the cyclic associative law (\ref{(ASS_CIRCLIC)}), then $z = (x / x) \cdot z$ for all $x, z\in Q$.
\end{lemma}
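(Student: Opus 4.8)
The plan is to lean on the two preceding lemmas, both of which apply here since a right division right cancellation groupoid is in particular a right division groupoid. By Lemma~\ref{CYCL_5} the operation is associative (identity~(\ref{(5)})), and by Lemma~\ref{CYCL_6} it is commutative. Thus $(Q,\cdot)$ is a commutative semigroup that additionally satisfies the right division identity~(\ref{(2)}), namely $(y/x)\cdot x = y$, and is right cancellative. These four facts — associativity, commutativity, identity~(\ref{(2)}), and right cancellation — are all I expect to need.

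First I would record the special case of~(\ref{(2)}) obtained by setting $y = x$, which gives $(x/x)\cdot x = x$; so $x/x$ already acts as a one-sided identity on the single element $x$. The goal is to promote this to the genuine left-identity statement $(x/x)\cdot z = z$ for every $z$. The main computation is to evaluate $((x/x)\cdot z)\cdot x$ and rewrite it, using associativity, commutativity, and the special case just recorded:
\[
((x/x)\cdot z)\cdot x = (x/x)\cdot(z\cdot x) = (x/x)\cdot(x\cdot z) = ((x/x)\cdot x)\cdot z = x\cdot z = z\cdot x .
\]
Right cancellation of $x$ on the right of $((x/x)\cdot z)\cdot x = z\cdot x$ then yields $(x/x)\cdot z = z$, which is exactly the claim.

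The step I expect to carry the real weight is the final appeal to right cancellation. Everything before it is a one-line manipulation inside a commutative semigroup, but the passage from ``$x/x$ is a right identity for $x$'' to ``$x/x$ is a left identity for every element'' genuinely requires the injectivity of $R_x$, which is precisely the right cancellation hypothesis distinguishing this lemma from Lemmas~\ref{CYCL_5} and~\ref{CYCL_6}. I do not anticipate any substantive obstacle: once associativity and commutativity are available, the identity falls out immediately, so the only care needed is to make sure the cancellation is applied on the correct side.
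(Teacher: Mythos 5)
Your proof is correct and follows essentially the same route as the paper: both rest on Lemmas~\ref{CYCL_5} and~\ref{CYCL_6} to get associativity and commutativity, then derive an equation with a common factor of $x$ and cancel it (the paper cancels $x$ on the left via commutativity and identity~(\ref{(4)}), you cancel it on the right directly). Your computation is a slightly more streamlined packaging of the same idea, and the appeal to right cancellation at the end is exactly where the extra hypothesis of this lemma is used, as you note.
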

\begin{proof}
In view of results from Lemma \ref{CYCL_5} and \ref{CYCL_6} we obtained that right division groupoid
satisfied  associative (\ref{(5)}) and it is commutative ($x \cdot y = y \cdot x$).
Rewrite (\ref{(5)}) as follows
\begin{equation} \label{CYCL11_1}
(x \cdot y) \cdot z = x \cdot (y \cdot z)
\end{equation}
From (\ref{(2)}) and commutative law we obtain
\begin{equation} \label{CYCL11_2}
x \cdot (y / x) = y
\end{equation}
Considering (\ref{CYCL11_1}), denote variables $x \rightarrow y$ and $y \rightarrow x$, and in view of commutative law, we obtain
\begin{equation} \label{CYCL11_3}
(x \cdot y) \cdot z = y \cdot (x \cdot z)
\end{equation}
From (\ref{CYCL11_1}) and (\ref{CYCL11_3}) we have
\begin{equation} \label{CYCL11_4}
x \cdot (y \cdot z) = y \cdot (x \cdot z)
\end{equation}
Considering (\ref{CYCL11_1}), denote variable $y \rightarrow (y / x)$ we obtain
\begin{equation} \label{CYCL11_5}
(x \cdot (y / x)) \cdot z \overset{(\ref{CYCL11_2})}{=} y \cdot z = x \cdot ((y / x) \cdot z)
\end{equation}
Considering (\ref{CYCL11_5}), denote variable $y \rightarrow x$ we obtain
\begin{equation} \label{CYCL11_6}
x \cdot z = x \cdot ((x / x) \cdot z)
\end{equation}
From (\ref{CYCL11_6}) and (\ref{(4)}) we immediately obtain
\begin{equation} \label{CYC_999}
z = (x / x) \cdot z
\end{equation}
\end{proof}

\begin{lemma} \label{CYCL_12}
If a right division right cancelation groupoid $(Q, \cdot , /)$ satisfies the cyclic associative law (\ref{(ASS_CIRCLIC)}), then $x / x =  y / y$ for all $x, y\in Q$.
\end{lemma}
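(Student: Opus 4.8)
The plan is to read the statement essentially straight off Lemma~\ref{CYCL_11}, whose real content is that for every $x \in Q$ the element $x/x$ is a \emph{left identity}: the identity $(x/x)\cdot z = z$ holds for all $z \in Q$. Consequently there is almost nothing left to compute, and the whole task reduces to converting the two statements ``$x/x$ is a left identity'' and ``$y/y$ is a left identity'' into the single equality $x/x = y/y$.

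First I would record, by applying Lemma~\ref{CYCL_11} once with the variable $x$ and once with the variable $y$, the two identities $(x/x)\cdot z = z$ and $(y/y)\cdot z = z$, both valid for every $z \in Q$. Chaining them through their common value $z$ gives
\begin{equation}
(x/x)\cdot z = (y/y)\cdot z
\end{equation}
for all $z \in Q$.

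The closing step is to cancel the common factor $z$, which here appears on the right of both products. I would therefore invoke the \emph{right} cancelation hypothesis, namely that each $R_z$ is injective ($u\cdot z = v\cdot z \Rightarrow u = v$), with $u = x/x$ and $v = y/y$; this yields $x/x = y/y$, as required. The one point that calls for care is precisely this: one must use right cancelation and not attempt to cancel $x/x$ against $y/y$ on the left, since left cancelation is not among the hypotheses. Because the free variable $z$ sits on the right in both products, right cancelation applies directly, and no further appeal to the associativity or commutativity established in Lemmas~\ref{CYCL_5} and~\ref{CYCL_6} is needed for this particular conclusion.
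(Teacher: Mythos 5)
Your proof is correct and takes essentially the same route as the paper: both reduce the claim to Lemma \ref{CYCL_11}'s statement that $x/x$ acts as a left identity and then cancel the right-hand factor --- you by invoking injectivity of $R_z$ directly, the paper by applying the equivalent identity $(u\cdot y)/y=u$ (identity (\ref{(4)})) to $((x/x)\cdot y)/y$. The difference is purely cosmetic.
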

\begin{proof}
If we put in identity \ref{(4)} $x \rightarrow x \slash x$ then we have $((x / x) \cdot y) / y = x / x$, and using equality (\ref{CYC_999}), we obtain $x /x = y / y$.
\end{proof}
In fact from previous lemmas it  follows that right division right cancelation groupoid with cyclic associative law has an identity element.

\begin{theorem} \label{CYCL_13}
If a right division right cancelation groupoid $(Q, \cdot, /)$ satisfies the cyclic associative law (\ref{(ASS_CIRCLIC)}),
then it is a commutative group relative to the operation $\cdot$ and it satisfies any from identities (\ref{(6)}) -- ( \ref{(ASS_CIRCLIC)}).
\end{theorem}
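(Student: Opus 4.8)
The plan is to collect the four preceding lemmas and then supply the single group axiom they do not yet give, namely the existence of inverses. First I would record that Lemma~\ref{CYCL_5} already makes $(Q,\cdot)$ associative and Lemma~\ref{CYCL_6} makes it commutative, so $(Q,\cdot)$ is a commutative semigroup. Next I would fix the element $e := x/x$; by Lemma~\ref{CYCL_12} this value does not depend on $x$, so $e$ is a well-defined constant, and by Lemma~\ref{CYCL_11} it satisfies $e\cdot z = z$ for all $z\in Q$. Commutativity then gives $z\cdot e = e\cdot z = z$, so $e$ is a two-sided identity element of $(Q,\cdot)$.

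It remains to produce inverses, and this is where the right division hypothesis enters. Since $R_x$ is surjective for every $x\in Q$, for the target $e$ there exists $y\in Q$ with $y\cdot x = e$; by commutativity $x\cdot y = e$ as well, so $y$ is a two-sided inverse of $x$. Thus every element of $Q$ is invertible. Together with associativity and the identity $e$, this shows $(Q,\cdot)$ is a group, and commutativity upgrades it to a commutative (abelian) group. (Right cancellation has already been used, through Lemmas~\ref{CYCL_11} and~\ref{CYCL_12}, to obtain $e$; it also guarantees that the inverse is unique, although uniqueness is automatic once associativity and $e$ are in hand.)

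Finally I would verify identities (\ref{(6)})--(\ref{(ASS_CIRCLIC)}). In a commutative semigroup any product of three elements equals $x\cdot y\cdot z$ regardless of how it is bracketed or in which order the factors appear, so each of the four identities reduces to this common value: (\ref{(6)}) is associativity itself, while (\ref{(7)}), (\ref{(8)}) and (\ref{(ASS_CIRCLIC)}) each have both sides equal to $x\cdot y\cdot z$ after reordering. Hence all four hold.

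The main obstacle is slight: essentially the only genuinely new content beyond the lemmas is extracting inverses from the surjectivity of the right translations, and the one point that needs care is checking that $e$ is a \emph{two-sided} identity before invoking right division to solve $y\cdot x = e$ --- a step that commutativity disposes of immediately. Consequently the theorem is, in effect, a corollary of Lemmas~\ref{CYCL_5}--\ref{CYCL_12} combined with right division.
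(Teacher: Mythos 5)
Your proof is correct and follows essentially the same route as the paper: assemble Lemmas \ref{CYCL_5}, \ref{CYCL_6}, \ref{CYCL_11} and \ref{CYCL_12} to get a commutative semigroup with identity $e=x/x$, then use the division property to conclude it is a group. The only difference is cosmetic --- you extract inverses explicitly from the surjectivity of $R_x$, whereas the paper compresses this into the standard observation that a division cancellation groupoid with identity is a (commutative) loop, hence, being associative, a commutative group.
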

\begin{proof}
By Lemma \ref{CYCL_6} it is a commutative groupoid.
Then by Lemmas \ref{CYCL_11} and \ref{CYCL_12} right division groupoid $(Q, \cdot, /)$ which satisfies  the cyclic associative  law has an identity element.

Therefore $(Q, \cdot, /)$ is a  division  cancelation groupoid with an identity element, i.e. it is a commutative loop, it is a commutative group.
\end{proof}

\subsection*{\centerline{Right division left cancelation groupoid}}

\begin{lemma} \label{LOOP_1}
If a right division left cancelation groupoid $(Q, \cdot, /, \backslash)$ satisfies the cyclic associative  law (\ref{(ASS_CIRCLIC)}), then it satisfies identity $y / y =x \backslash x$.
\end{lemma}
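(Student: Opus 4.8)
The plan is to upgrade the groupoid to a commutative semigroup, extract a genuine identity element, and then read off $x\backslash x$ from identity (\ref{(3)}).

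Since $(Q,\cdot)$ is a right division groupoid obeying the cyclic associative law, Lemma \ref{CYCL_5} gives associativity (\ref{(5)}) and Lemma \ref{CYCL_6} gives commutativity. In a commutative groupoid left cancellation coincides with right cancellation (because $a\cdot x=a\cdot y \iff x\cdot a=y\cdot a$), so in fact $(Q,\cdot)$ is a right division right cancellation groupoid with the cyclic associative law; Theorem \ref{CYCL_13} then already identifies it as a commutative group, in which both $y/y$ and $x\backslash x$ are the group identity. I would nevertheless prefer a self-contained argument that does not quote the full structure theorem, as follows.

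Repeat the computation of Lemma \ref{CYCL_11} up to the equation $x\cdot z = x\cdot((x/x)\cdot z)$: its derivation uses only associativity, commutativity and the right-division identity (\ref{(2)}), so it is available here. The one place where Lemma \ref{CYCL_11} invokes right cancellation (the passage through identity (\ref{(4)})) is now replaced by a direct appeal to the left cancellation hypothesis: cancelling $x$ on the left in $x\cdot z = x\cdot((x/x)\cdot z)$ yields $z = (x/x)\cdot z$ for all $x,z$. Thus $e_x := x/x$ is a left identity for every element; by commutativity it is then a two-sided identity, and since a groupoid has at most one two-sided identity we get $x/x = y/y =: e$ for all $x,y$.

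With $e$ a two-sided identity, $x\cdot e = x$ for each $x$. Substituting $y = e$ into identity (\ref{(3)}) gives $x\backslash x = x\backslash(x\cdot e) = e = y/y$, which is the claim. The only real subtlety is the third paragraph: one must verify that the chain of equalities in Lemma \ref{CYCL_11} leading to $x\cdot z = x\cdot((x/x)\cdot z)$ never uses right cancellation, so that it transfers to the present left cancellation setting with the appeal to (\ref{(4)}) replaced by left cancellation; everything afterwards is a one-line unfolding of (\ref{(3)}).
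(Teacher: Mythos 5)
Your argument is correct, but it takes a genuinely different route from the paper's. The paper never extracts an identity element inside this lemma: it manipulates the cyclic law directly, deriving $x \backslash (y\cdot z) = z\cdot (y/x)$, specializing to get $x\cdot(y/y)=x$ and hence $x/(y/y)=x$ via (\ref{(2)}), and then finishing in one stroke with the Birkhoff identity $(x/y)\backslash x = y$ of Lemma \ref{left cancellation right division}, which turns $(x/(y/y))\backslash x = x\backslash x$ into $y/y = x\backslash x$. You instead first upgrade the groupoid to a commutative semigroup via Lemmas \ref{CYCL_5} and \ref{CYCL_6}, rerun the computation of Lemma \ref{CYCL_11} as far as $x\cdot z = x\cdot((x/x)\cdot z)$ --- and your key observation, that this portion of that proof uses only associativity, commutativity and (\ref{(2)}) and never the right cancellation identity (\ref{(4)}), checks out --- then replace the appeal to (\ref{(4)}) by the left cancellation hypothesis to get $z=(x/x)\cdot z$, and conclude from uniqueness of a two-sided identity together with (\ref{(3)}). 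Both proofs are sound. Yours effectively establishes Lemma \ref{LOOP_11} (existence of the identity element) along the way, front-loading work that the paper defers, while the paper's version is a shorter purely equational derivation of exactly the identity $y/y=x\backslash x$. Your side remark that commutativity forces right cancellativity, so that Theorem \ref{CYCL_13} already applies, is also valid and is arguably the quickest route of all, though it leans on the full structure theorem rather than giving a first-principles derivation.
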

\begin{proof}
In identity $ x \cdot (y \cdot z) = (z \cdot x) \cdot y$ after renaming of variables we obtain
\begin{equation} \label{L62}
(y \cdot x) \cdot z = x \cdot (z \cdot y)
\end{equation}
 From identity (\ref{L62}) using identity (\ref{(3)})   we have
\begin{equation} \label{L82}
x \backslash ((y \cdot x) \cdot z) = z \cdot y
\end{equation}
If $y \cdot x = t$, then $t/ x = y$,  since $(t / x) \cdot x = t$ (identity (\ref{(2)})). Then we can re-write identity (\ref{L82})
in the following form
\begin{equation}
x \backslash (y \cdot z) = z \cdot (y / x) \label{L92}
\end{equation}
If we put in (\ref{L92}) $x = y$ and take into consideration that  $x \backslash (x \cdot z) = z$
(identity (\ref{(3)})), then we obtain $z = z \cdot (x / x)$. Changing the letter $z$ by letter $x$,  and the letter $x$ by letter $y$ further we have
\begin{equation}
x * (y / y) = x  \label{L102}
\end{equation}
Using  identity  (\ref{(2)}) from identity  (\ref{L102})
we obtain
\begin{equation}
x / (y / y) = x \label{L112}
\end{equation}
From  identity  (\ref{L112}) we have
\begin{equation}
(x / (y / y))\backslash x  = x\backslash x \label{L122}
\end{equation}

By Corollary \ref{left cancellation right division}  any right division left cancelation groupoid satisfies identity
(\ref{q3}).
If we apply to the left side of identity  (\ref{L122})  identity (\ref{q3}), then finally
we have
\begin{equation}
y / y =x \backslash x \label{quasigroup_IDENT}
\end{equation}
\end{proof}

\begin{lemma} \label{LOOP_11}
If a right division left cancelation groupoid $(Q, *, /, \backslash)$ satisfies the cyclic associative  law (\ref{(ASS_CIRCLIC)}), then it has an identity element.
\end{lemma}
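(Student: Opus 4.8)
The plan is to produce, as the identity element, the common value
$e := y/y = x\backslash x$ that Lemma~\ref{LOOP_1} hands us, and to verify it is two-sided. First I would observe that Lemma~\ref{LOOP_1} in particular makes $x/x$ independent of $x$: for every $x,y$ we have $y/y = x\backslash x$, so all the elements $y/y$ coincide. Hence $e := x/x$ is well defined, and the whole argument reduces to checking $e\cdot x = x = x\cdot e$ for all $x$.

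Second, I would establish the left-identity property, which is essentially free. Since $e = x/x$, instantiating identity (\ref{(2)}), namely $(y/x)\cdot x = y$, at $y \to x$ gives $(x/x)\cdot x = x$, that is $e\cdot x = x$ for every $x$. This uses only the right-division hypothesis (through (\ref{(2)})) and the constancy of $x/x$ from Lemma~\ref{LOOP_1}; no associative law is needed here.

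Third — the step where the cyclic law and the cancellation hypothesis enter — I would prove that $e$ is also a right identity. Substituting $z=e$ into the cyclic associative law (\ref{(ASS_CIRCLIC)}), $x\cdot(y\cdot z) = (z\cdot x)\cdot y$, yields $x\cdot(y\cdot e) = (e\cdot x)\cdot y$; feeding in the left-identity relation $e\cdot x = x$ obtained above, the right-hand side collapses to $x\cdot y$. Thus $x\cdot(y\cdot e) = x\cdot y$ for all $x,y$, and since the groupoid is left cancellative (each $L_x$ is injective) we may cancel the common left factor $x$ to conclude $y\cdot e = y$. Together with the second step this shows $e$ is a two-sided identity, which is exactly the assertion of the lemma.

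The calculations are very short, so the only genuine point of care is to arrange the substitution so that the \emph{available} cancellation applies. Because we have left cancellation but not right cancellation, the productive specialization is $z=e$, which produces $x\cdot(y\cdot e)=x\cdot y$ with the cancellable factor on the left; the alternative choice $y=e$ would instead give $x\cdot y = (y\cdot x)\cdot e$, from which the right-identity law does not follow without further hypotheses. I expect that single choice to be the crux of the proof.
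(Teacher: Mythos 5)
Your proof is correct, and its skeleton matches the paper's: both extract the candidate $e$ from Lemma~\ref{LOOP_1} via the constancy of $x\backslash x = y/y$, and both get the left-identity law $e\cdot y = y$ from identity (\ref{(2)}) applied to $(y/y)\cdot y$. Where you diverge is the upgrade to a two-sided identity. The paper appeals to commutativity of the groupoid, citing Lemma~\ref{CYCL_6} (which in turn rests on the associativity argument of Lemma~\ref{CYCL_5} for right division groupoids with the cyclic law), so that $e\cdot y = y$ immediately yields $y\cdot e = y$. You instead specialize $z = e$ in the cyclic law to get $x\cdot(y\cdot e) = (e\cdot x)\cdot y = x\cdot y$ and cancel the left factor $x$ using the left-cancellation hypothesis. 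Your route is more self-contained: it uses only the cyclic identity, identity (\ref{(2)}), and injectivity of $L_x$, and in particular does not need the commutativity and associativity machinery that the paper has already built. What the paper's route buys is economy within its own development (those lemmas are proved anyway and reused in Theorem~\ref{L_1}); what yours buys is a shorter logical dependency chain and a correct identification of why left (rather than right) cancellation is the hypothesis that makes the substitution $z=e$ productive. Your closing observation about the failure of the alternative specialization $y=e$ is accurate as well.
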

\begin{proof}
By Lemma \ref{LOOP_1} groupoid $(Q, *, /, \backslash)$ satisfies  identity $y / y =x \backslash x$. From this identity  we have $(y / y) * y  = (x \backslash x) * y$. Applying identity   $(x / y) * y = x$ (identity (\ref{(2)}))  to the left side of the last identity  we have $(x \backslash  x) * y = y$.

Then groupoid
$(Q, *, /, \backslash)$ has at least one left identity element $f$. Further we have $ x \backslash x = y / y = y \backslash y$, i.e. $x \backslash x = y \backslash y$. The last proves that we have unique left identity element $f$. From commutativity of the groupoid $(Q, *, /, \backslash)$ (Lemma \ref{CYCL_6}) we obtain, that this groupoid has two-sided identity element.
\end{proof}

\begin{theorem} \label{L_1}
If a right division left cancelation groupoid $(Q, \cdot, /, \backslash)$ satisfies  the cyclic associative  law (\ref{(ASS_CIRCLIC)}), then it is a commutative group relative to the operation $\cdot$ and it satisfies any from identities (\ref{(6)}) -- ( \ref{(ASS_CIRCLIC)}).
\end{theorem}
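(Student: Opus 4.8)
The plan is to follow the same route as in the proof of Theorem \ref{CYCL_13}, substituting the left-cancelation lemmas where appropriate. A right division left cancelation groupoid is in particular a right division groupoid, so Lemmas \ref{CYCL_5} and \ref{CYCL_6} apply verbatim: the groupoid $(Q,\cdot)$ is associative (it satisfies (\ref{(5)})) and commutative. Moreover, by Lemma \ref{LOOP_11} it possesses a two-sided identity element $f$.

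The key step is to promote the one-sided hypotheses to two-sided ones by means of commutativity. Since $x\cdot y = y\cdot x$, the translations coincide, $L_x = R_x$ for every $x\in Q$. Hence surjectivity of each $R_x$ (right division) forces surjectivity of each $L_x$, so $(Q,\cdot)$ is in fact a division groupoid; and injectivity of each $L_x$ (left cancelation) forces injectivity of each $R_x$, so $(Q,\cdot)$ is a cancelation groupoid. Being simultaneously a division and a cancelation groupoid, every translation $L_x$, $R_x$ is bijective, so $(Q,\cdot)$ is a quasigroup; together with the identity element $f$ it is a loop; being associative it is a group; and being commutative it is a commutative (abelian) group, as claimed.

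It then remains to check that an abelian group satisfies each of (\ref{(6)})--(\ref{(ASS_CIRCLIC)}). This is a routine verification: in an abelian group all of the expressions $(x\cdot y)\cdot z$, $z\cdot(y\cdot x)$, $y\cdot(x\cdot z)$ and $(z\cdot x)\cdot y$ reduce, by associativity and commutativity, to the product of $x$, $y$, $z$ in any fixed order, so each of (\ref{(6)}), (\ref{(7)}), (\ref{(8)}) and (\ref{(ASS_CIRCLIC)}) holds. I expect the only genuinely delicate point to be the passage from the one-sided division/cancelation conditions to the two-sided ones; once Lemma \ref{CYCL_6} supplies commutativity this passage is immediate, and the remainder of the argument is bookkeeping already carried out for Theorem \ref{CYCL_13}.
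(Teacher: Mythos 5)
Your proposal is correct and follows essentially the same route as the paper: invoke Lemmas \ref{CYCL_5}, \ref{CYCL_6} and \ref{LOOP_11} to get associativity, commutativity and an identity element, then conclude that the structure is a commutative loop and hence an abelian group. The only difference is that you explicitly justify the passage from one-sided division/cancelation to two-sided via $L_x=R_x$, a step the paper's proof leaves implicit when it asserts the groupoid is a ``division cancelation groupoid''; this is a welcome clarification rather than a deviation.
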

\begin{proof}
By Lemma \ref{LOOP_11} right division left cancelation groupoid $(Q, \cdot, /, \backslash)$ which satisfies  the cyclic associative  law has an identity element.
Thus $(Q, \cdot, /, \backslash)$ is a  division  cancelation groupoid with an identity element, i.e. it is a commutative loop, a commutative group (Lemmas \ref{CYCL_5} and \ref{CYCL_6}).

It easy to see that any commutative group satisfies identities (\ref{(7)}) and  (\ref{(8)}).
\end{proof}

\subsection*{\centerline{Left division left cancelation groupoid}}

\begin{lemma} \label{CYCL_1}
If a left division groupoid $(Q, \cdot , \backslash)$ satisfies the cyclic associative  law (\ref{(ASS_CIRCLIC)}),
then it satisfies and ordinary associative law $(x \cdot y) \cdot z = x \cdot (y \cdot z)$ (\ref{(5)}).
\end{lemma}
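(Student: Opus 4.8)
The plan is to avoid repeating the computation of Lemma \ref{CYCL_5} and instead obtain this statement from it by passing to the opposite groupoid, since left division is dual to right division and the cyclic law turns out to be self-dual. Concretely, I would introduce the operation $a\circ b:=b\cdot a$ together with the derived division $b\,/\,a:=a\backslash b$. Because $L^{\cdot}_a x=a\cdot x=x\circ a=R^{\circ}_a x$, the left translations of $(Q,\cdot)$ are literally the right translations of $(Q,\circ)$, so our hypothesis that every $L^{\cdot}_a$ is surjective says exactly that $(Q,\circ)$ is a right division groupoid. Moreover, (\ref{(1)}) gives $(b\,/\,a)\circ a=(a\backslash b)\circ a=a\cdot(a\backslash b)=b$, which is precisely the identity (\ref{(2)}) required of the division in Lemma \ref{CYCL_5}.

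Next I would verify that the cyclic associative law is invariant under this reversal. Rewriting both sides of (\ref{(ASS_CIRCLIC)}) through $a\circ b=b\cdot a$ gives
\[
x\cdot(y\cdot z)=(z\circ y)\circ x,\qquad (z\cdot x)\cdot y=y\circ(x\circ z),
\]
so (\ref{(ASS_CIRCLIC)}) for $(Q,\cdot)$ becomes $y\circ(x\circ z)=(z\circ y)\circ x$, which is exactly the cyclic law for $(Q,\circ)$ (with the roles $a=y$, $b=x$, $c=z$). Hence $(Q,\circ,/)$ is a right division groupoid satisfying the cyclic associative law, and Lemma \ref{CYCL_5} applies to it, yielding associativity of $\circ$. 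Since $(a\circ b)\circ c=c\cdot(b\cdot a)$ while $a\circ(b\circ c)=(c\cdot b)\cdot a$, associativity of $\circ$ is word-for-word the associativity of $\cdot$, i.e. identity (\ref{(5)}), which finishes the argument.

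Should a self-contained computation in $(Q,\cdot,\backslash)$ be preferred, the alternative is to mirror Lemma \ref{CYCL_5} line by line, replacing each appeal to (\ref{(2)}) by an appeal to the left-division identity (\ref{(1)}), so that a factor is always cancelled from the \emph{left} of a product rather than from the right. After relabelling (\ref{(ASS_CIRCLIC)}) into the form $(x\cdot y)\cdot z=y\cdot(z\cdot x)$, the substitution $x\to z\backslash x$ collapses $z\cdot(z\backslash x)=x$ and produces $((x\backslash z)\cdot y)\cdot x=y\cdot z$, while substituting $a=x\backslash z,\,b=y,\,c=x$ directly into (\ref{(ASS_CIRCLIC)}) gives $(x\backslash z)\cdot(y\cdot x)=z\cdot y$. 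Replacing $y$ by $y\cdot x$ in the first of these and invoking the second yields the permuted associativity $(z\cdot y)\cdot x=(y\cdot x)\cdot z$, which combined once more with (\ref{(ASS_CIRCLIC)}) collapses to (\ref{(5)}).

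In either route the only delicate point is bookkeeping. For the duality argument the single thing that must be checked carefully is the self-duality computation above, since an error there would silently apply Lemma \ref{CYCL_5} to the wrong identity; everything else is purely formal. For the direct route the main obstacle is choosing the substitutions so that $x\cdot(x\backslash y)=y$ rather than $(y/x)\cdot x=y$ is the tool that cancels — that is, always arranging the cancelled subterm to sit as the right factor of a product whose left factor is a bare variable. For this reason I expect the duality argument to be the cleaner and less error-prone one to write up.
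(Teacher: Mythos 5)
Your proposal is correct, but it proves the lemma by a genuinely different route than the paper. The paper's proof of Lemma \ref{CYCL_1} is a self-contained equational computation inside the left division groupoid: starting from the relabelled form $(x\cdot y)\cdot z=y\cdot(z\cdot x)$ of (\ref{(ASS_CIRCLIC)}) it derives a chain of auxiliary identities (its equations (11)--(15)) and eventually the permuted law $(x\cdot y)\cdot z=(z\cdot x)\cdot y$, which together with (\ref{(ASS_CIRCLIC)}) gives (\ref{(5)}) --- essentially the same endgame as your fallback computation, though your version reaches the permuted associativity in noticeably fewer steps and with clearer bookkeeping (the paper's passage from its equation (14) to (16) by ``setting $x=y$'' is hard to reconstruct as written). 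Your primary argument, by contrast, is a reduction: you check that $a\circ b:=b\cdot a$ turns a left division groupoid into a right division one with $(b/a)\circ a=b$ for $b/a:=a\backslash b$, that (\ref{(ASS_CIRCLIC)}) is self-dual under this reversal (your computation $(z\circ y)\circ x=y\circ(x\circ z)$ is exactly the cyclic law for $\circ$ at the triple $(y,x,z)$, and this is the one point that genuinely needed checking), and that associativity of $\circ$ is verbatim associativity of $\cdot$; then Lemma \ref{CYCL_5}, which the paper proves earlier and independently, finishes the job. What the duality route buys is economy and an explanation of why the left- and right-division halves of the paper run in parallel --- the same trick would let one deduce Lemma \ref{CYCL_2} from Lemma \ref{CYCL_6}, and so on. What the paper's direct computation buys is independence from Lemma \ref{CYCL_5} and from the correctness of the dualization. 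Both routes are valid; there is no gap in your argument.
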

\begin{proof}
If in identity (\ref{(ASS_CIRCLIC)}) we rename variables $x \rightarrow y$, $y \rightarrow z$ and $z \rightarrow x$,
then we obtain
\begin{equation} \label{10}
(x \cdot y) \cdot z = y \cdot (z \cdot x)
\end{equation}
If in equality (\ref{10}) we denote variables $y \rightarrow z \backslash x$, $z \rightarrow y$ and $x \rightarrow z$, then we obtain
\begin{equation}
(z \cdot (z \backslash x)) \cdot y = (z \backslash x) \cdot (y \cdot x)
\end{equation}
In view of equality (\ref{(1)}) we obtain the following identity
\begin{equation} \label{11}
x \cdot y = (z \backslash x) \cdot (y \cdot z)
\end{equation}
In (\ref{11}) denote variables $x \rightarrow y$, $y \rightarrow z$ and $z \rightarrow x$ and swap left and right sides of equality. Then  we have
\begin{equation} \label{12}
(x \backslash y) \cdot (z \cdot x) = y \cdot z
\end{equation}
In equality  (\ref{12}) we denote variable $x \rightarrow u$ and  we get
\begin{equation}
(u \backslash y) \cdot (z \cdot u) = y \cdot z
\end{equation}
Further we multiply both sides of equality by $x$ and  obtain the following
\begin{equation} \label{13}
(y \cdot z) \cdot x = ((u \backslash y) \cdot (z \cdot u)) \cdot x
\end{equation}
In (\ref{13}) denote variables $u \backslash y \rightarrow x$, $z \cdot u \rightarrow y$ and
$x \rightarrow z$ and considering (\ref{10}), we swap left and right sides of equation and obtain
\begin{equation} \label{14}
(x \cdot y) \cdot (z \cdot (y \backslash u)) = (u \cdot x) \cdot z
\end{equation}
In (\ref{12}) denote variable $z \rightarrow (u \cdot z)$ and in view of (\ref{10}), we have
\begin{equation} \label{15}
(x \backslash y) \cdot (z \cdot (x \cdot u)) = y \cdot (u \cdot z)
\end{equation}
If we assume that in the equality (\ref{14}) variable $x$ is equal to $y$ ($x = y$), then we get
\begin{equation} \label{16}
(x \cdot y) \cdot z = (z \cdot x) \cdot y
\end{equation}
From equalities (\ref{16}) and (\ref{(ASS_CIRCLIC)}) we immediately obtain that
\begin{equation}
(x \cdot y) \cdot z = x \cdot (y \cdot z)
\end{equation}
\end{proof}

\begin{lemma} \label{CYCL_2}
If a left division groupoid $(Q, \cdot , \backslash)$ satisfies the cyclic associative  law (\ref{(ASS_CIRCLIC)}), then it is a commutative groupoid.
\end{lemma}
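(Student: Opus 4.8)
The plan is to lean on Lemma \ref{CYCL_1}, which already tells us that $(Q,\cdot,\backslash)$ satisfies the ordinary associative law (\ref{(5)}). Once associativity is in hand, the cyclic law should collapse into a commutativity statement for elements that happen to be products, and the left division hypothesis can then be used to upgrade this to commutativity for all elements.

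Concretely, I would first write the cyclic associative law (\ref{(ASS_CIRCLIC)}) as $x\cdot(y\cdot z)=(z\cdot x)\cdot y$ and apply ordinary associativity (\ref{(5)}) to each side independently. On the left, $x\cdot(y\cdot z)=(x\cdot y)\cdot z$; on the right, $(z\cdot x)\cdot y=z\cdot(x\cdot y)$. Equating the two rewritten sides yields the single identity
\[
(x\cdot y)\cdot z = z\cdot(x\cdot y),
\]
valid for all $x,y,z\in Q$. In words, every element that can be written as a product $x\cdot y$ commutes with every element $z$.

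The next step is to drop the restriction that the left factor be a product. Fix arbitrary $u,z\in Q$ and choose any $x\in Q$. Since $(Q,\cdot,\backslash)$ is a left division groupoid, the translation $L_x$ is surjective, so there exists $y\in Q$ with $x\cdot y=u$. Substituting into the displayed identity gives $u\cdot z=(x\cdot y)\cdot z=z\cdot(x\cdot y)=z\cdot u$, and as $u$ and $z$ were arbitrary this is precisely the commutative law.

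The only point that genuinely requires the left division assumption is this last step: associativity together with the cyclic law by themselves deliver commutativity only for products, so surjectivity of $L_x$ is exactly what guarantees that \emph{every} element arises as a product and hence that commutativity is global. I do not expect any computational obstacle beyond the two applications of (\ref{(5)}); once Lemma \ref{CYCL_1} is invoked the argument is short.
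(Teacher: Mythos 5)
Your proof is correct, and it reaches the conclusion by a noticeably more direct route than the paper. Both arguments begin by invoking Lemma \ref{CYCL_1} to get ordinary associativity (\ref{(5)}), and both in effect pass through the identity $(x\cdot y)\cdot z = z\cdot(x\cdot y)$ (the paper's equation $x\cdot(y\cdot z)=z\cdot(x\cdot y)$ is the same thing modulo associativity). The difference is in how the restriction to products is removed. The paper stays entirely inside equational reasoning with the explicit division operation: it substitutes $y\to x\backslash y$ and $z\to y\backslash z$ into its intermediate identities, uses (\ref{(1)}) to cancel, and compares two resulting identities $x\cdot y = z\cdot((z\backslash y)\cdot x)$ and $z\cdot((z\backslash y)\cdot x)=y\cdot x$ to conclude $x\cdot y=y\cdot x$. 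You instead observe semantically that surjectivity of $L_x$ means every element of $Q$ \emph{is} a product, so centrality of all products is already global commutativity. Your version is shorter and makes transparent exactly where the left-division hypothesis is used; the paper's version has the advantage of being a pure identity-chasing derivation in the signature $(\cdot,\backslash)$, in keeping with its automated-prover methodology, and so does not appeal to the set-theoretic surjectivity statement directly. The one hypothesis your argument quietly uses is that $Q$ is non-empty (to choose the auxiliary $x$), which is guaranteed by Definition \ref{GROU}.
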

\begin{proof}
In (\ref{(ASS_CIRCLIC)}) denote  variables $x \rightarrow y$, $y \rightarrow z$ and $z \rightarrow x$,
we obtain
\begin{equation} \label{20}
(x \cdot y) \cdot z = y \cdot (z \cdot x)
\end{equation}
By  Lemma \ref{CYCL_1} $(Q, \cdot , \backslash)$ is a left division groupoid with identity (\ref{(ASS_CIRCLIC)}) satisfies the associative law (\ref{(5)}).

We  rewrite (\ref{(5)}) as follows
\begin{equation} \label{21}
(x \cdot y) \cdot z = x \cdot (y \cdot z)
\end{equation}
Comparing with (\ref{20}) we can write
\begin{equation} \label{22}
x \cdot (y \cdot z) = y \cdot (z \cdot x)
\end{equation}
From (\ref{(5)}) taking into consideration (\ref{21}) we obtain
\begin{equation} \label{23}
x \cdot (y \cdot z) = z \cdot (x \cdot y)
\end{equation}
If in identity (\ref{21}) we denote variable $y \rightarrow (x \backslash y)$, we have
\begin{equation} \label{241}
(x \cdot (x \backslash y)) \cdot z = x \cdot ((x \backslash y) \cdot z)
\end{equation}
and in view of (\ref{(5)}) we obtain
\begin{equation}
y \cdot z = x \cdot ((x \backslash y) \cdot z)
\end{equation}
If in (\ref{241}) we denote variables $y \rightarrow x$, $z \rightarrow y$ and $x \rightarrow z$, we get
\begin{equation} \label{25}
x \cdot y = z \cdot ((z \backslash x) \cdot y)
\end{equation}
In (\ref{23}) we denote variable $z \rightarrow (y \backslash z)$ , we have
\begin{equation} \label{26}
x \cdot (y \cdot (y \backslash z)) \overset{(\ref{(1)})}{=} x \cdot z = (y \backslash z) \cdot (x \cdot y)
\end{equation}
In view of (\ref{23}) and (\ref{26}) we obtain
\begin{equation} \label{27}
x \cdot y = z \cdot ((z \backslash y) \cdot x)
\end{equation}
Considering (\ref{25}), we swap left and right sides of equality and in view of substitutions $z \rightarrow x$ and $x \rightarrow z$, we have
\begin{equation} \label{28}
z \cdot ((z \backslash y) \cdot x) = y \cdot x
\end{equation}
From (\ref{27}) and (\ref{28}) we immediately obtain
\begin{equation}
x \cdot y = y \cdot x
\end{equation}
\end{proof}

\begin{lemma} \label{CYCL_3}
If a left division left cancelation groupoid $(Q, \cdot , \backslash)$ satisfies the cyclic associative law (\ref{(ASS_CIRCLIC)}), then $x  = (y\backslash y) x$ for all $x, y \in Q$.
\end{lemma}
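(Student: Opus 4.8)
The plan is to imitate the proof of Lemma~\ref{CYCL_11}, but in the left-division setting. By Lemmas~\ref{CYCL_1} and~\ref{CYCL_2} the groupoid $(Q,\cdot,\backslash)$ is already known to be associative (\ref{(5)}) and commutative, and as a left division left cancelation groupoid it satisfies both $x\cdot(x\backslash y)=y$ (\ref{(1)}) and $x\backslash(x\cdot y)=y$ (\ref{(3)}). Since the claim $x=(y\backslash y)\cdot x$ merely asserts that the element $y\backslash y$ acts as a left identity, it suffices to produce, for a fixed $x$, an equation of the form $x\cdot z=x\cdot\big((x\backslash x)\cdot z\big)$ and then to remove the leading factor $x$ by left cancelation.

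First I would use (\ref{(1)}) together with associativity (\ref{(5)}) to free a third variable: multiplying the identity $x\cdot(x\backslash y)=y$ on the right by $z$ and regrouping gives
\begin{equation}
y\cdot z=\big(x\cdot(x\backslash y)\big)\cdot z=x\cdot\big((x\backslash y)\cdot z\big),
\end{equation}
which is the exact analogue of (\ref{CYCL11_5}). Specializing $y\rightarrow x$ in this relation and using $x\cdot(x\backslash x)=x$ (the instance $y=x$ of (\ref{(1)})) yields
\begin{equation}
x\cdot z=x\cdot\big((x\backslash x)\cdot z\big).
\end{equation}
Applying $x\backslash(\,\cdot\,)$ to both sides and invoking (\ref{(3)})---equivalently, cancelling the left factor $x$, which is legitimate since $(Q,\cdot)$ is left cancelation---gives $z=(x\backslash x)\cdot z$. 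Renaming $x\rightarrow y$ and $z\rightarrow x$ then produces the asserted identity $x=(y\backslash y)\cdot x$.

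I do not expect a genuine obstacle here. Unlike the right-division case, where the auxiliary relation (\ref{CYCL11_2}) had to be extracted from (\ref{(2)}) and commutativity before the argument could start, in the present setting the corresponding step is nothing but identity (\ref{(1)}) itself, so the computation is strictly shorter. The only point requiring care is that the final cancellation of the leading $x$ be justified, and this is exactly the left cancelation hypothesis (encoded by (\ref{(3)})); in particular, commutativity is available from Lemma~\ref{CYCL_2} but is not actually needed for this lemma.
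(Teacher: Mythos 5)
Your proposal is correct and follows essentially the same route as the paper: both derive $y\cdot z=x\cdot\big((x\backslash y)\cdot z\big)$ from associativity (Lemma~\ref{CYCL_1}) together with identity (\ref{(1)}), and then remove the leading $x$ via (\ref{(3)}) after the specialization $y=x$ (the paper merely performs the cancellation before the specialization rather than after). Your closing observation that commutativity is not actually needed also matches the paper's proof, which invokes only associativity, (\ref{(1)}) and (\ref{(3)}).
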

\begin{proof}
If in identity of associativity $(xy)z = x(yz)$ we change $y$ by $x\backslash y$ then we have
\begin{equation} \label{P1}
x(x\backslash y) \cdot z  = x \cdot (x\backslash y)z
\end{equation}
 and after application to the left side of  equality (\ref{P1})  identity (\ref{(1)}) we obtain
\begin{equation} \label{P1}
y z  = x \cdot (x\backslash y)z
\end{equation}
From (\ref{P1}) we have
\begin{equation} \label{P2}
x\backslash (y z)  = x\backslash (x \cdot (x\backslash y)z)
\end{equation}
 after application to the right  side of  equality (\ref{P2})  identity (\ref{(3)}) we obtain
\begin{equation} \label{P3}
x\backslash (y z)  = (x\backslash y)z
\end{equation}
If we put in equality (\ref{P2}) $x=y$, then using identity (\ref{(3)}) we obtain
\begin{equation} \label{P3}
z  = (x\backslash x)z
\end{equation}
\end{proof}

\begin{lemma} \label{CYCL_2233}
If a left division left cancelation groupoid $(Q, \cdot, \backslash)$ satisfies the cyclic associative law (\ref{(ASS_CIRCLIC)}), then $x\backslash x = y\backslash y$ for all $x, y \in Q$.
\end{lemma}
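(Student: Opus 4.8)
The plan is to mirror the argument of Lemma~\ref{CYCL_12}, replacing the right-handed tools used there by their left-handed counterparts. By Lemma~\ref{CYCL_3} the element $x\backslash x$ is a left identity for every $x$, since $(x\backslash x)\cdot z = z$ holds for all $z\in Q$. I would first combine this with the commutativity supplied by Lemma~\ref{CYCL_2}, so that $(x\backslash x)\cdot z = z\cdot(x\backslash x) = z$; thus for each fixed $x$ the element $x\backslash x$ is in fact a two-sided identity of $(Q,\cdot)$.

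Next I would exploit identity~(\ref{(3)}), $x\backslash(x\cdot y)=y$, which is available because $(Q,\cdot,\backslash)$ is a left division left cancelation groupoid. Starting from the equality $(x\backslash x)\cdot y = y$ of Lemma~\ref{CYCL_3}, I would divide both sides on the left by $y$. The right-hand side becomes $y\backslash y$. For the left-hand side, commutativity (Lemma~\ref{CYCL_2}) rewrites $(x\backslash x)\cdot y$ as $y\cdot(x\backslash x)$, whence identity~(\ref{(3)}) gives $y\backslash\big(y\cdot(x\backslash x)\big)=x\backslash x$. Equating the two evaluations yields $x\backslash x = y\backslash y$, which is the desired identity.

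I do not expect a genuine obstacle here; the content is the bookkeeping of substitutions. The one point that must be handled with care is the justification that identity~(\ref{(3)}) is indeed at our disposal: it follows from the left division identity~(\ref{(1)}) together with left cancellation (apply~(\ref{(1)}) with $y$ replaced by $x\cdot y$ and then cancel $x$ on the left), exactly as already used in Lemma~\ref{CYCL_3}. An equivalent and even shorter route, if preferred, is to observe that any two two-sided identities $e=x\backslash x$ and $f=y\backslash y$ must coincide, since $e = e\cdot f = f$; this avoids reusing~(\ref{(3)}) altogether and rests only on Lemmas~\ref{CYCL_2} and~\ref{CYCL_3}.
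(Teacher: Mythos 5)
Your proof is correct and follows essentially the same route as the paper's: the paper likewise starts from identity~(\ref{(3)}), commutes to get $x\backslash(y\cdot x)=y$, substitutes $y\to y\backslash y$, and invokes Lemma~\ref{CYCL_3} to collapse the left side to $x\backslash x$. Your closing observation that two two-sided identities must coincide is a slightly cleaner packaging of the same ingredients, but not a different argument.
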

\begin{proof}
From identity  $x \backslash  (x \cdot y) = y$ using commutativity (Lemma \ref{CYCL_2}) we have $x \backslash (y \cdot x) = y$. If we change in the last identity $y$ by $y\backslash y$, then we have $x \backslash ((y\backslash y) \cdot x) = y\backslash y$, $x \backslash x = y\backslash y$.
\end{proof}

\begin{theorem} \label{CYCL_7}
If a left division left cancelation groupoid $(Q, \cdot, \backslash)$ satisfies the cyclic associative law (\ref{(ASS_CIRCLIC)}),
then it is a commutative group relative to the operation $\cdot$ and it satisfies any from identities (\ref{(6)}) -- ( \ref{(ASS_CIRCLIC)}).
\end{theorem}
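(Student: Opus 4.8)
The plan is to follow the same route as in the proofs of Theorems \ref{CYCL_13} and \ref{L_1}: to show that $(Q,\cdot)$ is an associative, commutative division-cancelation groupoid carrying a two-sided identity element, which forces it to be a commutative group. All the computational weight has already been carried by the four preceding lemmas, so the proof should be short and structural.

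First I would collect the facts already in hand. Since $(Q,\cdot,\backslash)$ is in particular a left division groupoid satisfying the cyclic associative law (\ref{(ASS_CIRCLIC)}), Lemma \ref{CYCL_1} gives associativity (identity (\ref{(5)})) and Lemma \ref{CYCL_2} gives commutativity. Next I would extract an identity element from Lemmas \ref{CYCL_3} and \ref{CYCL_2233}. By Lemma \ref{CYCL_2233} all elements of the form $x\backslash x$ coincide; write $e$ for this common value. Lemma \ref{CYCL_3} then reads $x = (y\backslash y)\cdot x = e\cdot x$ for every $x\in Q$, so $e$ is a left identity, and commutativity upgrades this at once to $x\cdot e = e\cdot x = x$, making $e$ a two-sided identity element.

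The remaining step is to promote the one-sided hypotheses to two-sided ones. By commutativity $R_x = L_x$ for every $x\in Q$, so surjectivity of every left translation (left division) yields surjectivity of every right translation (right division), and injectivity of every left translation (left cancelation) yields injectivity of every right translation (right cancelation). Hence $(Q,\cdot)$ is simultaneously a division groupoid and a cancelation groupoid, i.e. every translation is a bijection and the groupoid is a quasigroup. A quasigroup with an identity element is a loop, an associative loop is a group, and together with commutativity this gives a commutative group.

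For the final assertion that all of (\ref{(6)})--(\ref{(ASS_CIRCLIC)}) hold, I would argue exactly as in the proof of Theorem \ref{L_1}: the associative law (\ref{(6)}) is now available, Grassmann's law (\ref{(7)}) and identity (\ref{(8)}) are immediate rearrangements using commutativity together with associativity, and the cyclic law (\ref{(ASS_CIRCLIC)}) holds by hypothesis. I expect no genuine obstacle here; the only point requiring a moment's care is the passage from one-sided to two-sided division and cancelation, which is precisely where commutativity (Lemma \ref{CYCL_2}) must be invoked.
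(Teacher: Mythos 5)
Your proposal is correct and follows essentially the same route as the paper's own proof: commutativity from Lemma \ref{CYCL_2}, an identity element from Lemmas \ref{CYCL_3} and \ref{CYCL_2233}, and the upgrade from one-sided to two-sided division and cancelation to conclude the structure is a commutative group. If anything, you are more careful than the printed proof, which omits the explicit appeal to Lemma \ref{CYCL_1} for associativity and cites only Lemma \ref{CYCL_2233} where Lemma \ref{CYCL_3} is also needed for the identity element.
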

\begin{proof}
By Lemma \ref{CYCL_2} left division left cancelation groupoid (left quasigroup) $(Q, \cdot, \backslash)$ which satisfies  the cyclic associative  law is  commutative  relative to the operation $\cdot$.
By Lemma \ref{CYCL_2233} the groupoid has an identity element.

Therefore $(Q, \cdot, \backslash)$ is a  division  cancelation groupoid with an identity element, i.e. it is a commutative loop, i.e. abelian group.
\end{proof}
\begin{theorem} \label{CYCL_77}
If a left division right cancelation groupoid $(Q, \cdot, \backslash, /)$ satisfies the cyclic associative law (\ref{(ASS_CIRCLIC)}),
then it is a commutative group relative to the operation $\cdot$ and it satisfies any from identities (\ref{(6)}) -- ( \ref{(ASS_CIRCLIC)}).
\end{theorem}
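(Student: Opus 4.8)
The plan is to reduce this case to the already-established left division left cancelation case (Theorem \ref{CYCL_7}) by exploiting commutativity. First I would observe that Lemmas \ref{CYCL_1} and \ref{CYCL_2} are stated and proved for a groupoid that is merely a \emph{left} division groupoid satisfying the cyclic associative law; no cancelation hypothesis enters their proofs. Hence these two lemmas apply verbatim to $(Q,\cdot,\backslash,/)$, and directly from them $(Q,\cdot)$ is both associative (identity (\ref{(5)})) and commutative.

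The key step is to upgrade right cancelation to left cancelation using the commutativity just obtained. Suppose $a\cdot x = a\cdot y$. By commutativity $a\cdot x = x\cdot a$ and $a\cdot y = y\cdot a$, so $x\cdot a = y\cdot a$, and right cancelation (injectivity of $R_a$) yields $x=y$. Thus $L_a$ is injective for every $a\in Q$, i.e.\ the groupoid is left cancelative as well. Consequently $(Q,\cdot,\backslash)$ is a left division left cancelation groupoid satisfying the cyclic associative law, so it falls exactly under the hypotheses of Theorem \ref{CYCL_7}.

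Now I would simply invoke Theorem \ref{CYCL_7} to conclude that $(Q,\cdot)$ is a commutative (abelian) group. Concretely, surjectivity and injectivity of $L_a$ make $(Q,\cdot)$ a left quasigroup, and since commutativity gives $R_a = L_a$ as maps, every right translation is bijective too, so $(Q,\cdot)$ is a quasigroup; an associative quasigroup with the identity element supplied by Lemmas \ref{CYCL_1}--\ref{CYCL_2} is a group, here commutative. Finally, that $(Q,\cdot)$ satisfies identities (\ref{(6)})--(\ref{(ASS_CIRCLIC)}) follows exactly as in the proof of Theorem \ref{L_1}: (\ref{(6)}) is associativity, (\ref{(ASS_CIRCLIC)}) is the hypothesis, and every abelian group satisfies Grassmann's law (\ref{(7)}) and the law (\ref{(8)}).

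I do not expect a genuine obstacle in this argument. The single point requiring care is precisely the remark that Lemmas \ref{CYCL_1} and \ref{CYCL_2} need only left division and not any cancelation assumption; this is exactly what permits the right-cancelation hypothesis to be traded for left cancelation via commutativity, collapsing the whole case onto Theorem \ref{CYCL_7} with no new computation.
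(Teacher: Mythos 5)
Your proof is correct, but it takes a genuinely different route from the paper's. The paper, after invoking Lemmas \ref{CYCL_1} and \ref{CYCL_2} to get a commutative semigroup (exactly as you do), stays inside the right-cancelation signature: it works directly with the operation $/$ and identity (\ref{(4)}), deriving $(yx)/y=x$, then $x/y=y\backslash x$, then $(y/x)z=(yz)/x$, and finally $x/x=z/z$ to exhibit the two-sided identity element by hand. You instead observe that commutativity makes $L_a=R_a$, so injectivity of $R_a$ transfers to $L_a$, and the structure collapses onto the already-proved left division left cancelation case (Theorem \ref{CYCL_7}); this is shorter, avoids all new computation, and makes transparent why the four cancelation/division cases are not really independent. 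Two small points you should tighten: (i) to apply Theorem \ref{CYCL_7} literally you need $(Q,\cdot,\backslash)$ to satisfy identity (\ref{(3)}) as well as (\ref{(1)}), but this is immediate --- from $x\cdot(x\backslash(x\cdot y))=x\cdot y$ and the left cancelativity you just established one gets $x\backslash(x\cdot y)=y$; (ii) the identity element is not ``supplied by Lemmas \ref{CYCL_1}--\ref{CYCL_2}'' (those give only associativity and commutativity) but by Lemmas \ref{CYCL_3} and \ref{CYCL_2233} inside the proof of Theorem \ref{CYCL_7}, which you are invoking wholesale anyway, so the conclusion stands.
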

\begin{proof}
By Lemmas \ref{CYCL_1} and \ref{CYCL_2} groupoid $(Q, \cdot, \backslash, /)$ is a commutative semigroup.
Prove that this groupoid has a unique identity element.

If we change $y \rightarrow x\backslash y$ in the identity of associativity $(xy) z = x (yz)$, then we have
$(x(x\backslash y))z = x((x\backslash y)z)$. If we apply identity (\ref{(1)}) to the left side of the last identity, then we obtain
\begin{equation} \label{CYCL_134}
x((x\backslash y)z) = yz
\end{equation}
From commutativity ($xy = yx$) and right cancellative identity   ($(xy) / y = x$) we obtain
\begin{equation} \label{ten_CYCL}
(yx)/y = x
\end{equation}
From identity (\ref{(1)}) we have $(x(x\backslash y)) / x = y/ x$. If we apply to the left side of the last identity
(\ref{ten_CYCL}), then we obtain
\begin{equation} \label{72}
x/y = y \backslash x
\end{equation}

Using (\ref{72}) we can rewrite (\ref{CYCL_134}) in the form
\begin{equation} \label{15_CYCL}
x((y/ x)z) = yz
\end{equation}
From  (\ref{15_CYCL}) we have
\begin{equation} \label{14_CYCL}
(x((y/ x)z)) / x = (yz)/x
\end{equation}
If we apply (\ref{ten_CYCL}) to the left side of equality (\ref{14_CYCL}), we have
\begin{equation} \label{154_CYCL}
(y / x)z  = (yz)/x
\end{equation}
If we take in (\ref{154_CYCL}) $x=y$, then
\begin{equation} \label{16_CYCL}
(x / x)z  = (xz)/x \overset {(\ref{ten_CYCL})}{=} z
\end{equation}
Since $(Q, \cdot, \backslash, /)$ is commutative, further we have
\begin{equation} \label{16_CYCL}
z (x / x)= z
\end{equation}
If we change in (\ref{16_CYCL}) $z\rightarrow z/z$, then we have $(z/z) (x / x)= z/z$. If we apply to the left side of the last identity equality (\ref{16_CYCL}), then
\begin{equation} \label{156_CYCL}
x / x= z/z
\end{equation}
From (\ref{16_CYCL}), (\ref{156_CYCL}) and commutativity of
groupoid $(Q, \cdot, \backslash, /)$ we conclude that $(Q, \cdot, \backslash, /)$ has a unique two-sided identity element and it is a commutative group.
\end{proof}

\section*{\centerline{Tarki associative law}}
\subsection*{\centerline{Left division left cancelation groupoid}}
\begin{lemma} \label{TARKI_1}
If a left division groupoid $(Q, \cdot , \backslash)$ satisfies the Tarki associative  law (\ref{(ASS_TARKI)}), then it is a commutative groupoid.
\end{lemma}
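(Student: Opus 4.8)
The plan is to reproduce, in the present setting, the specialization already sketched in Example~2 and then to remove its hypothesis by invoking left divisibility. Starting from the Tarki associative law (\ref{(ASS_TARKI)}), namely $x \cdot (z\cdot y) = (x\cdot y)\cdot z$, I would substitute $z = x$ to obtain
\[
x \cdot (x\cdot y) = (x\cdot y)\cdot x .
\]
Introducing the abbreviation $t = x\cdot y$, this reads $x\cdot t = t\cdot x$. Thus commutativity is established immediately, but \emph{a priori} only for those elements $t$ lying in the image of the left translation $L_x$, i.e. for $t$ of the form $x\cdot y$ with $y\in Q$.

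The second and decisive step is to observe that in a left division groupoid this restriction is vacuous. Since $L_x$ is surjective for every $x\in Q$ (equivalently, by Theorem~\ref{ONE_IDEN_LEFT_DIVIS}(1) we may use identity (\ref{(1)}), $x\cdot(x\backslash t) = t$), every element $t\in Q$ can be written as $t = x\cdot y$ with $y = x\backslash t$. Hence the relation $x\cdot t = t\cdot x$ derived above actually holds for arbitrary $x, t\in Q$, which is precisely the assertion that $(Q,\cdot,\backslash)$ is commutative. This is exactly the phenomenon flagged by the remark following Example~2, that the condition ``the set of elements $t = x\cdot y$ contains every element of $Q$'' is automatic in a left (or right) division groupoid.

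There is in fact no serious obstacle to overcome here: unlike the cyclic associative law, where commutativity required the chain of substitutions in Lemmas~\ref{CYCL_2} and \ref{CYCL_6}, the Tarki law yields commutativity after a single specialization. The only point requiring care is the logical one, namely checking that surjectivity of $L_x$ genuinely upgrades ``$x\cdot t = t\cdot x$ for $t\in \operatorname{im}L_x$'' to ``for all $t\in Q$''; this is handled by the left division hypothesis as above, and no cancellation property is needed for this particular lemma.
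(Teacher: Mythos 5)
Your proof is correct and takes essentially the same approach as the paper's: both hinge on the specialization $z=x$ of the Tarki law together with surjectivity of $L_x$ (identity (\ref{(1)}), $x\cdot(x\backslash t)=t$) to let $t=x\cdot y$ range over all of $Q$. The paper arrives at the same point through a slightly longer substitution chain (first $y\to z\backslash x$ in the Tarki law, then identifying two variables), but the underlying mechanism is identical, and you are right that no cancellation hypothesis is needed.
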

\begin{proof}
In (\ref{(ASS_TARKI)}) denote variables $z \rightarrow y$ and $y \rightarrow z$. We obtain
\begin{equation} \label{T_1}
(x \cdot y) \cdot z = x \cdot (z \cdot y)
\end{equation}
In (\ref{(ASS_TARKI)}) denote variables $x \rightarrow z$ and $z \rightarrow z \backslash x$, we have
\begin{equation} \label{T_2}
z \cdot (y \cdot (z \backslash x)) = (z \cdot (z \backslash x)) \cdot y \overset{(\ref{(1)})}{=} x \cdot y
\end{equation}
From (\ref{T_2}) we get
\begin{equation}  \label{T_3}
x \cdot (y \cdot (x \backslash z)) = z \cdot y
\end{equation}
In  (\ref{T_3}) denote variables $x \rightarrow y$, $z \rightarrow x$, we obtain
\begin{equation}
y \cdot (y \cdot (y \backslash x)) \overset{(\ref{(1)})}{=} y \cdot x = x \cdot y
\end{equation}
\end{proof}

\begin{lemma} \label{TARKI_2}
If a left division groupoid $(Q, \cdot , \backslash)$ satisfies the Tarki associative  law (\ref{(ASS_TARKI)}),
then it satisfies associative law (\ref{(5)}).
\end{lemma}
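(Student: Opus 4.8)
The plan is to reduce the statement to the commutativity already proved in Lemma~\ref{TARKI_1}. Since the hypotheses of that lemma (a left division groupoid satisfying Tarki's law~(\ref{(ASS_TARKI)})) are exactly the hypotheses here, I may invoke it directly: the groupoid is commutative, $x\cdot y = y\cdot x$ for all $x,y\in Q$. My expectation is that once commutativity is in hand, ordinary associativity will follow from Tarki's law by a single substitution, with no further appeal to the left-division structure or to the division operation $\backslash$.

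Concretely, first I would write down Tarki's identity in the form $x\cdot(z\cdot y) = (x\cdot y)\cdot z$, which is~(\ref{(ASS_TARKI)}). Then I would rewrite the inner product $z\cdot y$ appearing on the left-hand side by means of commutativity, replacing it with $y\cdot z$. This turns the left-hand side into $x\cdot(y\cdot z)$, so the identity becomes
\begin{equation}
x\cdot(y\cdot z) = (x\cdot y)\cdot z, \notag
\end{equation}
which is precisely the associative law~(\ref{(5)}). Thus the entire argument is a one-step rewriting: commute the two factors inside the parenthesis on the left of Tarki's law.

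I do not anticipate any genuine obstacle. The left-division hypothesis and the operation $\backslash$ enter only through Lemma~\ref{TARKI_1}, where they are used (via identity~(\ref{(1)})) to establish commutativity; after that, the passage from Tarki's law to ordinary associativity is purely formal. The only point worth verifying is that the variable positions in~(\ref{(ASS_TARKI)}) and~(\ref{(5)}) align correctly after the single commutative swap of $z\cdot y$ to $y\cdot z$, and they plainly do. Hence the main work has already been carried out in the preceding lemma, and this lemma is essentially a corollary of it.
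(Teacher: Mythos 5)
Your proof is correct and is essentially identical to the paper's: both invoke Lemma \ref{TARKI_1} for commutativity and then perform the single swap of $z\cdot y$ to $y\cdot z$ inside Tarki's law to obtain (\ref{(5)}). No issues.
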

\begin{proof}
In  (\ref{(ASS_TARKI)}) denote variables $z \rightarrow y$ and $y \rightarrow z$. We obtain
\begin{equation} \label{T2_1}
(x \cdot y) \cdot z = x \cdot (z \cdot y)
\end{equation}
As follows from Lemma \ref{TARKI_1} left division groupoid satisfies commutative law.
We used commutative law for the right side of  (\ref{T2_1}) and  immediately obtain
\begin{equation}
(x \cdot y) \cdot z = x \cdot (y \cdot z)
\end{equation}
\end{proof}

\begin{lemma} \label{TARKI_6}
If a left division left cancelation groupoid $(Q, \cdot , \backslash)$ satisfies the Tarki associative  law (\ref{(ASS_TARKI)}),
then it satisfies $x = x \cdot (y \backslash y)$ and $x = (y \backslash y) \cdot x$.
\end{lemma}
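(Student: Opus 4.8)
The plan is to exploit the fact that, by Lemmas \ref{TARKI_1} and \ref{TARKI_2}, the groupoid $(Q,\cdot,\backslash)$ is already known to be both commutative and associative; so it is a commutative semigroup that also happens to be a left division left cancelation groupoid, and in particular identities (\ref{(1)}) and (\ref{(3)}) are at our disposal. Because of commutativity I only need to produce one of the two claimed equalities --- say $x=(y\backslash y)\cdot x$ --- since the other one, $x=x\cdot(y\backslash y)$, will then follow immediately.

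First I would reproduce essentially the computation of Lemma \ref{CYCL_3}, which used nothing beyond associativity together with identities (\ref{(1)}) and (\ref{(3)}). Substituting $y\to x\backslash y$ in the associative law $(x\cdot y)\cdot z = x\cdot(y\cdot z)$ and simplifying the left-hand factor with $x\cdot(x\backslash y)=y$ (identity (\ref{(1)})) gives $y\cdot z = x\cdot((x\backslash y)\cdot z)$. Applying the left division $x\backslash(\,\cdot\,)$ to both sides and then using $x\backslash(x\cdot w)=w$ (identity (\ref{(3)})) on the right-hand side yields the key intermediate relation $x\backslash(y\cdot z) = (x\backslash y)\cdot z$.

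Next I would specialize this relation at $x=y$: the left-hand side becomes $x\backslash(x\cdot z)=z$ by identity (\ref{(3)}) again, while the right-hand side is $(x\backslash x)\cdot z$, so $z = (x\backslash x)\cdot z$. Renaming $z\to x$ and $x\to y$ this is exactly $x=(y\backslash y)\cdot x$, the second asserted identity. Finally, invoking commutativity (Lemma \ref{TARKI_1}) I would rewrite $(y\backslash y)\cdot x$ as $x\cdot(y\backslash y)$, obtaining $x=x\cdot(y\backslash y)$ as well.

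I do not expect a genuine obstacle here, since the derivation is formally parallel to that of Lemma \ref{CYCL_3} and merely transplanted into the Tarski setting; the two ingredients that make it go through are that associativity (from Lemma \ref{TARKI_2}) together with (\ref{(1)}) and (\ref{(3)}) forces the distributive-type relation $x\backslash(y\cdot z)=(x\backslash y)\cdot z$, and that commutativity (from Lemma \ref{TARKI_1}) is available to convert the left-sided conclusion into the right-sided one. The only point to watch is making sure each application of (\ref{(1)}) and (\ref{(3)}) is matched to the correct argument.
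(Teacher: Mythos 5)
Your proof is correct, but it does not follow the paper's own argument for this lemma. The paper's proof is considerably longer: starting from associativity and commutativity it builds up the chain (\ref{T6_4})--(\ref{T6_11}), invoking commutativity at several intermediate points, until it reaches the relation $(x\backslash y)\cdot z = y\cdot(x\backslash z)$ (equation (\ref{T6_12})), and only then specializes $x=y$ to get $(y\backslash y)\cdot z = y\cdot(y\backslash z)=z$. You instead derive the single key relation $x\backslash(y\cdot z)=(x\backslash y)\cdot z$ directly from associativity together with identities (\ref{(1)}) and (\ref{(3)}), set $x=y$, and conclude $z=(x\backslash x)\cdot z$; this is a verbatim transplant of the paper's proof of Lemma \ref{CYCL_3} into the Tarski setting, as you note. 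Your route is more economical and has the added merit of isolating exactly where commutativity is needed: not at all for $x=(y\backslash y)\cdot x$, and only at the final step to convert it into $x=x\cdot(y\backslash y)$. Every application of (\ref{(1)}) and (\ref{(3)}) in your argument is matched to the correct argument, and applying the operation $x\backslash(\,\cdot\,)$ to both sides of an equation is legitimate since $\backslash$ is a genuine binary operation of the algebra, so there is no gap.
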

\begin{proof}
As follows from Lemma \ref{TARKI_1}, left division groupoid with Tarki associativity law (\ref{(ASS_TARKI)}) is a commutative groupoid, it is means that
\begin{equation} \label{COMM}
x \cdot y = y \cdot x
\end{equation}
 for all $x, y \in Q$.

>From Lemma \ref{TARKI_2}, it follows that the groupoid $(Q, \cdot, \backslash)$ is associative (\ref{(5)}).
We rewrite identity  (\ref{(5)}) as follows
\begin{equation} \label{T6_1}
(x \cdot y) \cdot z = x \cdot (y \cdot z)
\end{equation}
Using (\ref{COMM}) and (\ref{(5)}) we write  (\ref{T6_1}) as follows
\begin{equation}
(x \cdot y) \cdot z = x \cdot (y \cdot z) \overset{(\ref{COMM})}{=} (y \cdot z) \cdot x  \overset{(\ref{(5)})}{=} y \cdot (z \cdot x) \overset{(\ref{COMM})}{=} y \cdot (x \cdot z)
\end{equation}
As a result we have
\begin{equation} \label{T6_2}
(x \cdot y) \cdot z = y \cdot (x \cdot z)
\end{equation}
Further  we rewrite left side of (\ref{T6_2}) using (\ref{(5)}), as follows
\begin{equation} \label{T6_3}
(x \cdot y) \cdot z \overset{(\ref{(5)})}{=}  x \cdot (y \cdot z) = y \cdot (x \cdot z)
\end{equation}
In (\ref{T6_1}) denote variable $y \rightarrow (x \backslash y)$  and we obtain
\begin{equation} \label{T6_4}
(x \cdot (x \backslash y)) \cdot z \overset{(\ref{(1)})}{=} y \cdot z = x \cdot ((x \backslash y) \cdot z)
\end{equation}
In (\ref{T6_4}) rename variables $x \rightarrow z$, $y \rightarrow x$ and $z \rightarrow y$. We have
\begin{equation} \label{T6_5}
x \cdot y = z \cdot ((z \backslash x) \cdot y)
\end{equation}
In (\ref{T6_3}) rename variables $y \rightarrow z \backslash y$ we obtain
\begin{equation} \label{T6_6}
x \cdot ((z \backslash y) \cdot z) = (z \backslash y) \cdot (x \cdot z)
\end{equation}
For the left side of the (\ref{T6_6}) we get
\begin{equation} \label{T6_7}
x \cdot ((z \backslash y) \cdot z) \overset{(\ref{COMM})}{=} x \cdot y
\end{equation}
For the right side of the (\ref{T6_6}) we get
\begin{equation} \label{T6_8}
(z \backslash y) \cdot (x \cdot z) \overset{(\ref{COMM})}{=} (z \backslash y) \cdot (z \cdot x) \overset{(\ref{T6_3})}{=} z \cdot (x \cdot (z \backslash y))
\end{equation}
From (\ref{T6_7}) and (\ref{T6_8}) we immediately obtain
\begin{equation} \label{T6_9}
x \cdot y = z \cdot (x \cdot (z \backslash y))
\end{equation}
Considering (\ref{T6_9}) denote variables $z \rightarrow x$, $x \rightarrow y$
and $y \rightarrow z $, we get
\begin{equation} \label{T6_10}
x \cdot (y \cdot (x \backslash z)) = y \cdot z
\end{equation}
From (\ref{T6_4}) and (\ref{T6_10}) we have
\begin{equation} \label{T6_11}
x \cdot ((x \backslash y) \cdot z) = x \cdot (y \cdot (x \backslash z))
\end{equation}
By appling left cancelation (\ref{(3)}) to the (\ref{T6_11}), we get
\begin{equation} \label{T6_12}
(x \backslash y) \cdot z = y \cdot (x \backslash z)
\end{equation}
Let's $x = y$ in (\ref{T6_12}), we get
\begin{equation}
(y \backslash y) \cdot z = y \cdot (y \backslash z) \overset{(\ref{(3)})}{=} z
\end{equation}
and finally we obtain
\begin{equation}
z = (y \backslash y) \cdot z \overset{(\ref{COMM})}{=} z \cdot (y \backslash y)
\end{equation}
\end{proof}

\begin{lemma} \label{TARKI_7}
If a left division left cancelation groupoid $(Q, \cdot, \backslash)$ satisfies the Tarki associative  law (\ref{(ASS_TARKI)}), then $x \backslash x = y \backslash y$ for all $x, y \in Q$.
\end{lemma}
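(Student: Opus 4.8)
The plan is to combine three facts already established: the left-cancellation identity (\ref{(3)}), namely $x \backslash (x \cdot y) = y$; the commutativity of $(Q, \cdot, \backslash)$ from Lemma \ref{TARKI_1}; and the absorption identity $(y \backslash y) \cdot x = x$ from Lemma \ref{TARKI_6}. The strategy is to feed the special element $y \backslash y$ into (\ref{(3)}) in a position where Lemma \ref{TARKI_6} can collapse the inner product down to $x$, so that the left-hand side reduces to $x \backslash x$ while the right-hand side remains $y \backslash y$.

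First I would rewrite (\ref{(3)}) using commutativity. Since $x \cdot y = y \cdot x$ by Lemma \ref{TARKI_1}, identity (\ref{(3)}) can be read in the commuted form $x \backslash (y \cdot x) = y$, valid for all $x, y \in Q$.

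Next I would specialize the free variable $y$ to $y \backslash y$. This substitution turns the commuted identity into $x \backslash ((y \backslash y) \cdot x) = y \backslash y$. By Lemma \ref{TARKI_6} the inner factor satisfies $(y \backslash y) \cdot x = x$, so the left-hand side becomes $x \backslash x$, and we obtain $x \backslash x = y \backslash y$ at once.

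I do not anticipate a genuine obstacle, since the preparatory lemmas supply everything required. The only point needing attention is that the substitution must be applied to the \emph{commuted} form of (\ref{(3)}): the factor $y \backslash y$ has to sit to the left of $x$ so that Lemma \ref{TARKI_6} absorbs it, rather than to the right, where the plain form of (\ref{(3)}) would not apply directly. Once that placement is correct, no further cancellation beyond (\ref{(3)}) itself is needed.
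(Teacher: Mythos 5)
Your proof is correct and follows essentially the same route as the paper: commute identity (\ref{(3)}) to $x \backslash (y \cdot x) = y$, substitute $y \backslash y$ for the free variable, and collapse the inner product via Lemma \ref{TARKI_6}. You are in fact slightly more explicit than the paper, which leaves the appeal to $(y\backslash y)\cdot x = x$ implicit in its final step.
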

\begin{proof}
From identity  $x \backslash  (x \cdot y) = y$ using commutativity (Lemma \ref{TARKI_1}) we have
$x \backslash (y \cdot x) = y$. If we change in the last identity $y$ by $y\backslash y$, then we have
$x \backslash ((y\backslash y) \cdot x) = y\backslash y$, $x \backslash x = y\backslash y$.
\end{proof}

\begin{theorem} \label{TARKI_THEOR_1}
If a left division left cancelation groupoid $(Q, \cdot, \backslash)$ satisfies the Tarki associative  law (\ref{(ASS_TARKI)}), then it is a commutative group relative to the operation $\cdot$ and it satisfies any from identities (\ref{(6)}) -- ( \ref{(ASS_CIRCLIC)}).
\end{theorem}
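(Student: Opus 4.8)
The plan is to assemble the four preceding Tarki lemmas into the single statement that $(Q,\cdot)$ is an abelian group, and then to check the family of associative-like identities by direct computation in such a group, exactly in the spirit of the proof of Theorem~\ref{CYCL_7}. First I would invoke Lemma~\ref{TARKI_1} and Lemma~\ref{TARKI_2} to record that $(Q,\cdot)$ is simultaneously commutative and associative, hence a commutative semigroup.

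Next I would produce the identity element. By Lemma~\ref{TARKI_7} the expression $y\backslash y$ does not depend on $y$; call its common value $e$. Lemma~\ref{TARKI_6} then yields $x\cdot e = x = e\cdot x$ for every $x\in Q$, so $e$ is a two-sided identity element of $(Q,\cdot)$.

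It remains to upgrade the commutative semigroup with identity to a group. Since the groupoid is a left division groupoid, each left translation $L_x$ is surjective, and since it is a left cancelation groupoid, each $L_x$ is injective; hence every $L_x$ is a bijection and $(Q,\cdot)$ is a left quasigroup. Commutativity (Lemma~\ref{TARKI_1}) forces every right translation to coincide with a left translation, so $(Q,\cdot)$ is in fact a quasigroup. A quasigroup possessing an identity element is a loop, and by associativity (Lemma~\ref{TARKI_2}) this loop is a group; commutativity then makes it an abelian group.

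Finally I would verify that an abelian group satisfies each of (\ref{(6)})--(\ref{(ASS_CIRCLIC)}). This is routine: writing the operation additively and using commutativity and associativity, both sides of each of the identities reduce to the same sum of the three arguments, so the permutation of factors occurring in the cyclic law $x\cdot(y\cdot z)=(z\cdot x)\cdot y$ and in the Tarki law (\ref{(ASS_TARKI)}) is absorbed at once. I do not anticipate a genuine obstacle here; the only mechanical point is keeping track of which factor moves where, a matter of transcription rather than of mathematics. All the substantive work has already been carried out in Lemmas~\ref{TARKI_1}--\ref{TARKI_7}, so the theorem is essentially an assembly of those results.
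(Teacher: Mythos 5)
Your proposal is correct and follows essentially the same route as the paper: it invokes Lemmas \ref{TARKI_1} and \ref{TARKI_2} for the commutative semigroup structure, Lemmas \ref{TARKI_6} and \ref{TARKI_7} for the identity element, and then concludes that the division and cancelation hypotheses yield an abelian group. The only difference is that you spell out the quasigroup--loop--group upgrade and the final verification of identities (\ref{(6)})--(\ref{(ASS_CIRCLIC)}), steps the paper leaves implicit.
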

\begin{proof}
By Lemmas \ref{TARKI_1} and \ref{TARKI_2} left division left cancelation groupoid (left quasigroup) $(Q, \cdot, \backslash)$ which satisfies the Tarki associative  law is a commutative semigroup relative to the operation $\cdot$.
By Lemmas \ref{TARKI_6} and  \ref{TARKI_7} the groupoid has a unique identity element.

Therefore $(Q, \cdot)$ is a commutative group and it satisfies any from identities (\ref{(6)}) -- ( \ref{(ASS_CIRCLIC)}).
\end{proof}

\subsection*{\centerline{Left division right cancelation groupoid}}

\begin{lemma} \label{TARKI_8}
If a left division right cancelation groupoid $(Q, \cdot , \backslash, /)$ satisfies the Tarki associative  law (\ref{(ASS_TARKI)}), then it satisfies $x \backslash y = y / x$.
\end{lemma}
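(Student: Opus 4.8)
The plan is to play the two one-sided operations against each other: the backslash is governed by the left-division identity (\ref{(1)}), $x\cdot(x\backslash y)=y$, while the slash is governed by the right-cancelation identity (\ref{(4)}), $(y\cdot x)/x=y$, and I will bridge the two using the commutativity already available in this setting. First I would invoke Lemmas \ref{TARKI_1} and \ref{TARKI_2} to record that, under the Tarki law (\ref{(ASS_TARKI)}), the groupoid $(Q,\cdot)$ is a commutative semigroup; in particular $x\cdot y=y\cdot x$ holds throughout.

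Combining commutativity with (\ref{(4)}) I would first note the slightly shifted cancelation rule
\[ (x\cdot y)/x = y, \]
which follows by rewriting $x\cdot y=y\cdot x$ inside (\ref{(4)}). Then, starting from the left-division identity (\ref{(1)}) and dividing both sides on the right by $x$, I get
\[ (x\cdot(x\backslash y))/x = y/x. \]
The left-hand side is exactly of the form just displayed, with the inner factor $x\backslash y$ in the role of $y$, so it collapses to $x\backslash y$. Reading off both sides yields $x\backslash y=y/x$, which is the assertion.

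I do not expect a genuine obstacle once Lemmas \ref{TARKI_1} and \ref{TARKI_2} are in hand: the identity is a one-line consequence of feeding the left-division law (\ref{(1)}) into the right-cancelation law (\ref{(4)}). The only point needing a little care is the direction of cancelation — the slash in (\ref{(4)}) strips a factor occurring on the \emph{right}, whereas (\ref{(1)}) presents the bare $x$ on the \emph{left} of $x\cdot(x\backslash y)$, so commutativity is exactly what is needed to move $x$ into cancelling position. This mirrors the derivation of identity (\ref{72}) in the proof of Theorem \ref{CYCL_77}.
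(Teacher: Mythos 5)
Your proposal is correct and follows essentially the same route as the paper: both derive the shifted cancelation rule $(x\cdot y)/x=y$ from identity (\ref{(4)}) plus the commutativity of Lemma \ref{TARKI_1}, then apply $/x$ to the left-division identity (\ref{(1)}) and collapse the left-hand side with that rule to read off $x\backslash y=y/x$. The paper merely inserts an intermediate step equating $x\cdot(x\backslash y)$ with $(x\cdot y)/x$ before dividing, which your version streamlines away without any loss.
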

\begin{proof}
Let's start from equality (\ref{(4)}) where we rename variable $x \rightarrow y$ and $y \rightarrow x$. We have
\begin{equation} \label{T87_1}
 x = (x \cdot y) / y \overset{commutativity}{=} (y \cdot x) / y
\end{equation}
In equality (\ref{T87_1}) we rename variable $x \rightarrow y$ and $y \rightarrow x$, we get
\begin{equation} \label{T87_2}
(x \cdot y) / x = y
\end{equation}
Considering (\ref{(1)}) and (\ref{T87_2}) we can write as follows
\begin{equation} \label{T87_3}
x \cdot (x \backslash y) = (x \cdot y) / x
\end{equation}
and
\begin{equation} \label{T87_4}
(x \cdot (x \backslash y)) / x  = ((x \cdot y) / x) / x
\end{equation}
\begin{equation} \label{T87_5}
(x \cdot (x \backslash y)) / x  = x \backslash y
\end{equation}
\begin{equation} \label{T87_6}
((x \cdot y) / x) / x = y / x
\end{equation}
From (\ref{T87_5}) and (\ref{T87_6}) we obtain
\begin{equation} \label{T87_7}
x \backslash y = y / x
\end{equation}
Let's $ x = y$, then we obtain
\begin{equation}
x / x = x \backslash x
\end{equation}
\end{proof}

\begin{lemma} \label{TARKI_9}
If a left division right cancelation groupoid $(Q, \cdot , \backslash, /)$ satisfies the Tarki associative  law (\ref{(ASS_TARKI)}), then it satisfies $y = (x / x) \cdot y$.
\end{lemma}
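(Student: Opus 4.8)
The plan is to lean on the structural facts already proved for this groupoid: by Lemma~\ref{TARKI_1} the operation $\cdot$ is commutative, by Lemma~\ref{TARKI_2} it is associative (so~(\ref{(5)}) holds), and by Lemma~\ref{TARKI_8} the two divisions agree, $x\backslash y = y/x$. The target $y = (x/x)\cdot y$ says that $x/x$ is a left identity; by commutativity a left identity is automatically two-sided, so it is enough to prove the stated one-sided form.

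First I would rewrite the left-division identity in terms of $/$. Substituting $x\backslash y = y/x$ (Lemma~\ref{TARKI_8}) into identity~(\ref{(1)}), $x\cdot(x\backslash y) = y$, produces the auxiliary identity
\begin{equation*}
x\cdot(y/x) = y .
\end{equation*}
This is the bridge that lets a quotient $y/x$ be absorbed when it sits next to $x$.

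Next I would substitute $y \to y/x$ in the associative law $(x\cdot y)\cdot z = x\cdot(y\cdot z)$. The left-hand factor becomes $x\cdot(y/x) = y$ by the auxiliary identity, so the equation collapses to $y\cdot z = x\cdot\bigl((y/x)\cdot z\bigr)$. Putting $y = x$ (so that $y/x$ becomes $x/x$) yields
\begin{equation*}
x\cdot z = x\cdot\bigl((x/x)\cdot z\bigr) .
\end{equation*}

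The concluding step is a cancellation, and this is the only place that needs care. The hypothesis gives right cancellation, not left cancellation, so I cannot simply strike the leading factor $x$. Instead I would first invoke commutativity (Lemma~\ref{TARKI_1}) to turn both sides around, obtaining $z\cdot x = \bigl((x/x)\cdot z\bigr)\cdot x$, and then apply the right-cancellation identity~(\ref{(4)}), $(u\cdot x)/x = u$, to the common right factor $x$. This gives $z = (x/x)\cdot z$, i.e. the claim. The manoeuvre of routing cancellation through commutativity is precisely the device used to finish Lemma~\ref{CYCL_11}, which has the identical conclusion.
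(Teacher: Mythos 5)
Your proposal is correct and follows essentially the same route as the paper: both derive the bridge identity $x\cdot(y/x)=y$ from Lemma~\ref{TARKI_8} and~(\ref{(1)}), substitute $y\to y/x$ into the associative law to get $y\cdot z = x\cdot((y/x)\cdot z)$, and then cancel the outer $x$ via the commutativity-plus-right-cancellation identity $(x\cdot u)/x=u$. The only cosmetic difference is that the paper performs the cancellation before specializing $y=x$ (obtaining the general identity $(y\cdot z)/x=(y/x)\cdot z$ first), whereas you specialize first and cancel afterwards.
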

\begin{proof}
Let's write (\ref{(5)}) as follows
\begin{equation} \label{T9_1}
(x \cdot y) \cdot z  = x \cdot (y \cdot z)
\end{equation}
From Lemma \ref{TARKI_8} we have
\begin{equation} \label{T9_3}
x \backslash y = y / x
\end{equation}
From (\ref{T9_3}) and (\ref{(1)}) we get
\begin{equation} \label{T9_4}
x \cdot (y / x) = y
\end{equation}
Taking into consideration  (\ref{(4)}) and commutative law we can write
\begin{equation} \label{T9_5}
(y \cdot x) / y = x
\end{equation}
In equality (\ref{T9_5}) rename variables $x \rightarrow y$ and $y \rightarrow x$, we have
\begin{equation} \label{T9_6}
(x \cdot y) / x = y
\end{equation}
From (\ref{T9_4}) and (\ref{T9_1}) we obtain
\begin{equation} \label{T9_7}
x \cdot y = z \cdot ((x / z) \cdot y)
\end{equation}
Rename variables $x \rightarrow y$, $y \rightarrow z$ and $z \rightarrow x$ in equality (\ref{T9_7})
 as follows
\begin{equation} \label{T9_8}
x \cdot ((y / x) \cdot z) = y \cdot z
\end{equation}
We "multiply" both sides of equality  (\ref{T9_8}) and obtain
\begin{equation} \label{T9_9}
(x \cdot ((y / x) \cdot z)) / x = (y \cdot z) / x
\end{equation}

Using (\ref{T87_2}) in (\ref{T9_9}),   renaming  variables $y \rightarrow x$, $z \rightarrow y$ and $x \rightarrow z$, and swapping  left and right sides of equality (\ref{T9_9}), we have
\begin{equation} \label{T9_9}
(x \cdot y) / z = (x / z) \cdot y
\end{equation}
If we put  in equality (\ref{T9_9}) $z = x$, then  we obtain
\begin{equation} \label{T9_10}
(x \cdot y) / x \overset{(\ref{T9_6})}{=} y = (x / x) \cdot y
\end{equation}
\end{proof}

\begin{lemma} \label{TARKI_10}
If a left division left cancelation groupoid $(Q, \cdot, \backslash, /)$ satisfies the Tarki associative  law (\ref{(ASS_TARKI)}), then $x / x = y / y$ for all $x, y \in Q$.
\end{lemma}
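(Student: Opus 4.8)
The plan is to exploit Lemma \ref{TARKI_9}, which tells us that for every fixed $x$ the element $x/x$ is a \emph{left identity}: the identity $y = (x/x)\cdot y$ holds for all $y \in Q$. Thus each of the elements $x/x$ and $y/y$ acts as a left neutral element of the groupoid, and the whole task reduces to showing that two left identities of a commutative groupoid must coincide.

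First I would record commutativity. Since a left division right cancelation groupoid is in particular a left division groupoid, Lemma \ref{TARKI_1} applies and gives $a\cdot b = b\cdot a$ for all $a,b \in Q$. Next I would instantiate Lemma \ref{TARKI_9} twice. Applying the left identity $x/x$ to the element $y/y$ yields
\begin{equation}
(x/x)\cdot(y/y) = y/y,
\end{equation}
while interchanging the roles of $x$ and $y$ gives
\begin{equation}
(y/y)\cdot(x/x) = x/x.
\end{equation}

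Finally I would close the argument by commutativity: the two left-hand sides above are equal, so
\begin{equation}
y/y = (x/x)\cdot(y/y) = (y/y)\cdot(x/x) = x/x,
\end{equation}
which is exactly the claimed identity $x/x = y/y$.

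There is essentially no obstacle here; the computation is a two-line consequence of the preceding lemmas. The only points to watch are that Lemma \ref{TARKI_9} must be invoked for an arbitrary second argument (so that substituting $y/y$ is legitimate), and that commutativity is genuinely available in this setting via Lemma \ref{TARKI_1}. One should also note that, as in the neighbouring statements of this subsection, the hypothesis is naturally read as \emph{right} cancelation, since the operation $/$ and Lemmas \ref{TARKI_8}--\ref{TARKI_9} pertain to the right cancelation case.
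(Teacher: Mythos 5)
Your proof is correct and follows essentially the same route as the paper: both arguments rest on Lemma \ref{TARKI_9} (each $x/x$ acts as a left identity, hence by commutativity from Lemma \ref{TARKI_1} also as a right identity) and your reading of the hypothesis as right cancelation matches the paper's intent for this subsection. The only difference is the closing step --- the paper cancels $x$ on the right via identity (\ref{(4)}) from $((y/y)\cdot x)/x = x/x$, whereas you evaluate the left-identity property at $y/y$ and $x/x$ and invoke commutativity; both are valid two-line finishes.
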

\begin{proof}
From identity  $x \cdot (y / y) = x $ using commutativity (Lemma \ref{TARKI_1}) we have
$(y / y) \cdot x = x$.
From the last equality we have
$((y / y) \cdot x) / x = x / x$, $y / y = x / x$.
\end{proof}

\begin{theorem} \label{TARKI_THEOR_2}
If a left division right cancelation groupoid $(Q, \cdot, \backslash, /)$ satisfies the Tarki associative  law (\ref{(ASS_TARKI)}), then it is a commutative group relative to the operation $\cdot$ and it satisfies any from identities (\ref{(6)}) -- ( \ref{(ASS_CIRCLIC)}).
\end{theorem}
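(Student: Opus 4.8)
The plan is to recognize that this theorem is a synthesis step: all the substantive computation has already been carried out in Lemmas \ref{TARKI_1}, \ref{TARKI_2}, \ref{TARKI_9} and \ref{TARKI_10}, so what remains is to read off the three group axioms (associativity, a two-sided identity, and inverses) and then to observe that every abelian group satisfies the associative-like identities in question. I would follow exactly the template already used in the proof of Theorem \ref{TARKI_THEOR_1}.

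First I would invoke Lemmas \ref{TARKI_1} and \ref{TARKI_2}, which together assert that $(Q,\cdot)$ is commutative and associative, hence a commutative semigroup; this also lets me move freely between left-sided and right-sided statements. Next I would build the identity element. Lemma \ref{TARKI_9} shows that $x/x$ is a left identity, since $y=(x/x)\cdot y$ for every $y$, while Lemma \ref{TARKI_10} shows that all the elements $x/x$ coincide, so this left identity is unique. Writing $e$ for the common value $x/x$, commutativity (Lemma \ref{TARKI_1}) upgrades it to a genuine two-sided identity, $e\cdot y=y\cdot e=y$.

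Finally I would produce inverses: since $(Q,\cdot)$ is a left division groupoid the translation $L_x$ is surjective, so the equation $x\cdot x'=e$ has a solution $x'$, and commutativity makes $x'$ a two-sided inverse of $x$. Equivalently, by commutativity the right cancelation hypothesis also yields left cancelation and left division also yields right division, so $(Q,\cdot,\backslash,/)$ is a division cancelation groupoid with an identity element, i.e. a commutative loop, hence a commutative group. The closing claim that such a group satisfies each of (\ref{(7)}), (\ref{(8)}) and (\ref{(ASS_CIRCLIC)}) is immediate, since in an abelian group both sides of each identity reduce to the product of the three factors taken in some order. I do not anticipate a genuine obstacle here; the only point requiring care is the passage from the one-sided identity supplied by Lemma \ref{TARKI_9} to a two-sided one, which is exactly where commutativity is indispensable.
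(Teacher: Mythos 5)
Your proposal is correct and follows essentially the same route as the paper: the paper's own proof simply cites Lemma \ref{TARKI_1} for commutativity and Lemma \ref{TARKI_10} for the identity element and then asserts the group structure, while you additionally make explicit the appeal to Lemma \ref{TARKI_2} for associativity, to Lemma \ref{TARKI_9} for the left identity, and the construction of inverses from surjectivity of $L_x$. These are exactly the details the paper leaves implicit, so there is no substantive difference in approach.
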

\begin{proof}
By Lemma \ref{TARKI_1} left division right cancelation groupoid $(Q, \cdot, \backslash, /)$ which satisfies the Tarki associative  law is  commutative  relative to the operation $\cdot$.
By Lemma \ref{TARKI_10} the groupoid has an identity element.

Then $(Q, \cdot, \backslash, /)$ is a  commutative group relative to the operation $\cdot$.
\end{proof}

\subsection*{\centerline{Right division right cancelation groupoid}}

\begin{lemma} \label{TARKI_5}
If a right division right cancelation groupoid $(Q, \cdot , /)$ satisfies the Tarki associative  law (\ref{(ASS_TARKI)}),
then it satisfies the identity  $x \cdot (y / y) = x$.
\end{lemma}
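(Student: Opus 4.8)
The plan is to prove the identity directly from the Tarki law (\ref{(ASS_TARKI)}) and the right-division identity (\ref{(2)}) alone, without first establishing commutativity or associativity; in fact right cancellation will turn out to be unnecessary here. The guiding idea is that substituting a quotient $x/y$ into the left-most slot of (\ref{(ASS_TARKI)}) manufactures the subterm $(x/y)\cdot y$, which (\ref{(2)}) immediately collapses to $x$.

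Concretely, I would first replace $x$ by $x/y$ in (\ref{(ASS_TARKI)}), i.e. in $x\cdot(z\cdot y)=(x\cdot y)\cdot z$, obtaining $(x/y)\cdot(z\cdot y)=((x/y)\cdot y)\cdot z$; applying (\ref{(2)}) to the right-hand side gives the auxiliary identity $(x/y)\cdot(z\cdot y)=x\cdot z$. Since this holds for all $z$, I would then substitute $z\to z/y$ and again simplify $(z/y)\cdot y=z$ by (\ref{(2)}), arriving at the interchange law $(x/y)\cdot z=x\cdot(z/y)$, valid for all $x,y,z\in Q$.

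With this interchange law in hand, the lemma is one more specialization away: setting $z=y$ yields $(x/y)\cdot y=x\cdot(y/y)$, and a final application of (\ref{(2)}) to the left-hand side turns this into $x=x\cdot(y/y)$, which is exactly the claim. The only genuinely non-mechanical step is discovering the two substitutions ($x\to x/y$ and then $z\to z/y$) that trigger (\ref{(2)}); everything after that is forced. This kind of substitution search is precisely what the automated prover cited in the paper excels at, so I expect no conceptual obstacle beyond locating that first quotient substitution.
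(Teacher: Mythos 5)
Your proof is correct and is essentially identical to the paper's: your auxiliary identity $(x/y)\cdot(z\cdot y)=x\cdot z$ is the paper's (\ref{T5_3}), your interchange law $(x/y)\cdot z=x\cdot(z/y)$ is its (\ref{T5_4}), and the final specialization $z=y$ is the same. Your observation that only the right-division identity (\ref{(2)}) is needed, not right cancellation, also matches what the paper's argument actually uses.
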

\begin{proof}
We  rewrite identity  (\ref{(ASS_TARKI)})  using renaming variables as $y \rightarrow z$,
$z \rightarrow y$, as follows
\begin{equation} \label{T5_1}
(x \cdot y) \cdot z = x \cdot (z \cdot y)
\end{equation}
If in   (\ref{T5_1}) we rename variables as $x \rightarrow (x / z)$, $y \rightarrow z$ and $z \rightarrow y$,
we obtain
\begin{equation} \label{T5_2}
((x / z) \cdot z) \cdot y \overset{(\ref{(2)})}{=} x y = (x / z) \cdot (y \cdot z)
\end{equation}
If we  rewrite (\ref{T5_2}) using renaming variables as $z \rightarrow y$,
$y \rightarrow z$, we have
\begin{equation} \label{T5_3}
(x / y) \cdot (z \cdot y) = x \cdot z
\end{equation}
In (\ref{T5_3}) we rename variable as $z \rightarrow (z / y)$. We get
\begin{equation} \label{T5_4}
(x / y) \cdot ((z / y) \cdot y) \overset{(\ref{(2)})}{=} (x / y) \cdot z = x \cdot (z / y)
\end{equation}
Let's in (\ref{T5_4}) $z = y$. Then we obtain
\begin{equation}  \label{T5_5}
(x / y) \cdot y \overset{(\ref{(2)})}{=} x = x \cdot (y / y)
\end{equation}
\end{proof}

\begin{lemma} \label{TARKI_51}
If a right division right cancelation groupoid $(Q, \cdot , /)$ satisfies  Tarki  law (\ref{(ASS_TARKI)}), then it is associative (\ref{(5)}).
\end{lemma}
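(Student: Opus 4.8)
The plan is to prove associativity \emph{directly}, without passing through commutativity (which need not hold here). First I would record the two ``mirror'' forms of the Tarki law \ref{(ASS_TARKI)} obtained by renaming variables, namely $(x\cdot y)\cdot z = x\cdot(z\cdot y)$ and $x\cdot(y\cdot z) = (x\cdot z)\cdot y$; relabelling the first of these also gives $(x\cdot z)\cdot y = x\cdot(y\cdot z)$. Putting these together shows that the target identity \ref{(5)} is equivalent to the single identity $x\cdot(z\cdot y) = x\cdot(y\cdot z)$, and it is this reduced identity that I would establish.

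The main tool is the identity \ref{T5_3} already produced inside the proof of Lemma \ref{TARKI_5}, namely $(x/y)\cdot(z\cdot y) = x\cdot z$. I would complement it with the companion identity $(x/y)\cdot(y\cdot z) = x\cdot z$. To obtain the companion, first note the auxiliary relation $(x/y)\cdot z = x\cdot(z/y)$, which follows from \ref{T5_3} by the substitution $z\mapsto z/y$ together with \ref{(2)}. Next take the Tarki form $x\cdot(y\cdot z) = (x\cdot z)\cdot y$ and substitute $z\mapsto z/y$. On the left the inner factor $y\cdot(z/y)$ equals $(y/y)\cdot z$ (the auxiliary relation with $x=y$), and then $x\cdot((y/y)\cdot z) = x\cdot z$, because $y/y$ is a right identity by Lemma \ref{TARKI_5} and the mirror Tarki form $(a\cdot b)\cdot c = a\cdot(c\cdot b)$ with $c=y/y$ gives $a\cdot((y/y)\cdot b)=a\cdot b$. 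On the right the same auxiliary relation turns $x\cdot(z/y)$ into $(x/y)\cdot z$, and the mirror form $(a\cdot b)\cdot c = a\cdot(c\cdot b)$ rewrites $((x/y)\cdot z)\cdot y$ as $(x/y)\cdot(y\cdot z)$. Both sides collapse to the companion identity $(x/y)\cdot(y\cdot z) = x\cdot z$.

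Comparing the two identities gives $(x/y)\cdot(z\cdot y) = (x/y)\cdot(y\cdot z)$ for all $x,y,z$. Since the groupoid is right division and right cancellative, $R_y$ is a bijection, so as $x$ ranges over $Q$ the element $x/y$ ranges over all of $Q$; replacing $x/y$ by an arbitrary $u$ yields $u\cdot(z\cdot y) = u\cdot(y\cdot z)$ for all $u,y,z$, i.e. exactly the reduced identity $x\cdot(z\cdot y) = x\cdot(y\cdot z)$. Finally, the Tarki form $(x\cdot y)\cdot z = x\cdot(z\cdot y)$ combined with this gives $(x\cdot y)\cdot z = x\cdot(z\cdot y) = x\cdot(y\cdot z)$, which is \ref{(5)}.

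The step I expect to be the main obstacle is the derivation of the companion identity, since it requires threading the auxiliary relation $(x/y)\cdot z = x\cdot(z/y)$ and the right-identity property of $y/y$ through the substitution without slip. I would also stress the concluding move: the passage from ``$u\cdot(z\cdot y)=u\cdot(y\cdot z)$ for all $u$'' to \ref{(5)} must be carried out through the mirror forms of Tarki and \emph{not} by cancelling $u$ on the left. Left cancellation is not among the hypotheses and in fact fails in general (for instance the left-projection groupoid $x\cdot y = x$ is right division, right cancellative and satisfies \ref{(ASS_TARKI)}, yet is far from left cancellative), so this last step is purely formal rather than a cancellation.
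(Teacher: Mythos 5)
Your proof is correct, and it reaches the goal by a genuinely different route from the paper's, even though both arguments begin with the same reduction: using (\ref{(ASS_TARKI)}) to replace associativity (\ref{(5)}) by the single identity $x\cdot(z\cdot y)=x\cdot(y\cdot z)$ (the paper's (\ref{90})). From there the paper divides on the \emph{right}: it first derives $(x\cdot(y\cdot z))/y=x\cdot z$ (its (\ref{91})) from (\ref{(ASS_TARKI)}) and (\ref{(4)}), and then evaluates the four-variable expression $(x\cdot(y\cdot(z\cdot u)))/y$ in two ways, obtaining $(x\cdot u)\cdot z$ and $x\cdot(u\cdot z)$ and hence associativity in one stroke. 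You instead work with left factors of the form $x/y$: starting from (\ref{T5_3}), i.e. $(x/y)\cdot(z\cdot y)=x\cdot z$, you manufacture the companion $(x/y)\cdot(y\cdot z)=x\cdot z$ via the auxiliary relation $(x/y)\cdot z=x\cdot(z/y)$ and the right-identity property of $y/y$ from Lemma \ref{TARKI_5}, and then close the argument by observing that $x\mapsto x/y$ is onto (which, as you note, is exactly where right cancellation, via (\ref{(4)}), enters). Both proofs use the same raw ingredients --- (\ref{(ASS_TARKI)}), (\ref{(2)}) and (\ref{(4)}) --- but your version trades the paper's four-variable double evaluation for two three-variable identities plus a surjectivity step, which makes the role of each hypothesis slightly more visible; the paper's version is shorter once (\ref{91}) is in hand and never needs to invoke surjectivity of the division map explicitly. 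Your closing caveats are also well placed: commutativity genuinely fails here (the paper's Example \ref{TAR_EX1} and your left-projection groupoid both witness this), so the final passage to (\ref{(5)}) must go through the Tarki rewriting rather than left cancellation, exactly as you say.
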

\begin{proof}
Using Tarki  law we can rewrite associativity in the following form
\begin{equation} \label{90}
x \cdot (z \cdot y) = x \cdot (y \cdot z)
\end{equation}
 Then, we prove this lemma, if we prove identity (\ref{90}).

>From (\ref{(ASS_TARKI)}) and (\ref{(4)}) we have
\begin{equation} \label{91}
(x \cdot (y \cdot z)) / y = x \cdot z
\end{equation}
Indeed, $(x \cdot y) \cdot z = x \cdot (z \cdot y)$, $((x \cdot y) \cdot z)/z = (x \cdot (z \cdot y))/z$, $x \cdot y = (x \cdot (z \cdot y))/z$.

>From (\ref{91}) and (\ref{(ASS_TARKI)}) we have
\begin{equation}
(x \cdot (y \cdot (z \cdot u)) / y = (x \cdot u) \cdot z
\end{equation}
Indeed, we can change  in (\ref{91}) $x \rightarrow xu$ and apply to the left side of obtained identity Tarki associative law (\ref{(ASS_TARKI)}).

>From (\ref{91}) and (\ref{(ASS_TARKI)}) we have
\begin{equation} \label{94}
(x \cdot (y \cdot (z \cdot  u)) / y = x \cdot (u \cdot z)
\end{equation}
Indeed, we can change  in (\ref{91}) $z\rightarrow uz$ and apply to the left side of obtained identity Tarki associative law (\ref{(ASS_TARKI)}).

>From (\ref{91}) and (\ref{94}) we have
$(x \cdot u) \cdot z = x \cdot (u \cdot z)$.
\end{proof}

\begin{example} \label{TAR_EX1}
\[
\begin{array}{ll}
\begin{array}{l|ll}
\cdot  & 0 & 1\\
\hline
    0 & 0 & 0 \\
    1 & 1 & 1
\end{array}
&
\begin{array}{r|rr}
/  & 0 & 1\\
\hline
    0 & 0 & 0 \\
    1 & 1 & 1
\end{array}
\end{array}
\]
\end{example}

As follows from Example \ref{TAR_EX1}, if a right division groupoid $(Q, \cdot , /)$ satisfies the Tarki associative  law (\ref{(ASS_TARKI)}), then it is not a  commutative groupoid and it does not contain two-sided identity element.

\subsection*{\centerline{Right division left cancelation groupoid}}

\begin{lemma} \label{TARKI_88}
If a left cancelation groupoid $(Q, \cdot , \backslash)$ satisfies the Tarki associative  law (\ref{(ASS_TARKI)}), then it is associative.
\end{lemma}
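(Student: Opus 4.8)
The plan is to reduce ordinary associativity (\ref{(5)}) to a single application of left cancellation. Left cancellation means precisely that every left translation $L_u$ is injective (equivalently, that identity (\ref{(3)}), $u\backslash(u\cdot w)=w$, holds). Hence it suffices to produce, for arbitrary $u,x,y,z\in Q$, the equality $u\cdot\big(x\cdot(y\cdot z)\big)=u\cdot\big((x\cdot y)\cdot z\big)$; cancelling the common left factor $u$ then delivers $x\cdot(y\cdot z)=(x\cdot y)\cdot z$, which is (\ref{(5)}).

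To verify that common equality I would first record the convenient ``peeling'' form of the Tarki law (\ref{(ASS_TARKI)}): reading $x\cdot(z\cdot y)=(x\cdot y)\cdot z$ with the inner factors renamed gives
\[
u\cdot(A\cdot B)=(u\cdot B)\cdot A
\]
for all $u,A,B$. Applying this rule to the right-hand product, with $A=x\cdot y$ and $B=z$, collapses it in one step to $u\cdot\big((x\cdot y)\cdot z\big)=(u\cdot z)\cdot(x\cdot y)$. For the left-hand product I would apply the same rule twice: first with $A=x$, $B=y\cdot z$ to reach $\big(u\cdot(y\cdot z)\big)\cdot x$, then on the inner factor with $A=y$, $B=z$ to reach $\big((u\cdot z)\cdot y\big)\cdot x$. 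A final pass of (\ref{(ASS_TARKI)}) in its raw form $(a\cdot b)\cdot c=a\cdot(c\cdot b)$, taken with $a=u\cdot z$, $b=y$, $c=x$, rewrites $\big((u\cdot z)\cdot y\big)\cdot x$ as $(u\cdot z)\cdot(x\cdot y)$. Thus both sides equal the single common term $(u\cdot z)\cdot(x\cdot y)$, the desired equality holds, and injectivity of $L_u$ finishes the argument.

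The step I expect to be the genuine obstacle — and the reason the cancellation hypothesis is indispensable — is the \emph{choice} of reduction. A direct attempt to deduce (\ref{(5)}) from (\ref{(ASS_TARKI)}) stalls: the Tarki law instantly rewrites $(x\cdot y)\cdot z$ as $x\cdot(z\cdot y)$, so associativity becomes equivalent to $x\cdot(y\cdot z)=x\cdot(z\cdot y)$, i.e. (after cancelling $x$) to the commutativity $y\cdot z=z\cdot y$; but with no further structure commutativity is no more accessible than associativity, and one circles. The device that breaks this symmetry is to pre-multiply by the fresh variable $u$ before invoking cancellation, which is exactly what allows the two genuinely distinct Tarki reductions of $u\cdot\big(x\cdot(y\cdot z)\big)$ and of $u\cdot\big((x\cdot y)\cdot z\big)$ to meet at the value $(u\cdot z)\cdot(x\cdot y)$. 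I would double-check each of the four rewriting steps by hand (or with Prover~9, as the author does elsewhere) to confirm that every one is a legitimate instance of (\ref{(ASS_TARKI)}), after which only a single use of left cancellation remains.
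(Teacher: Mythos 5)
Your proof is correct and matches the paper's strategy: both adjoin a fresh fourth variable $u$, use the Tarki law (\ref{(ASS_TARKI)}) purely as a rewriting rule to bring the two sides of the associativity law into a common form, and finish with a single application of left cancellation (identity (\ref{(3)})). The only cosmetic difference is that you pre-multiply by $u$ on the left and cancel $u$, whereas the paper post-multiplies the Tarki identity by $u$ on the right, pushes $u$ inward with two more Tarki rewrites, and cancels the leading factor $x$.
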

\begin{proof}
Let's rewrite (\ref{(ASS_TARKI)}) as follows
\begin{equation} \label{TARKI88_1}
(x \cdot y) \cdot z = x \cdot (z \cdot y)
\end{equation}
Considering (\ref{TARKI88_1}) as follows
\begin{equation} \label{TARKI88_4}
((x \cdot y) \cdot z) \cdot u = (x \cdot (z \cdot y)) \cdot u
\end{equation}
From left side of the equality (\ref{TARKI88_4}) we obtain
\begin{equation} \label{TARKI88_5}
((x \cdot y) \cdot z) \cdot u \overset{(\ref{TARKI88_1})}{=} (x \cdot y) \cdot (u \cdot z) \overset{(\ref{TARKI88_1})}{=} x \cdot ((u \cdot z) \cdot y)
\end{equation}
From right side of the equality (\ref{TARKI88_4}) we obtain
\begin{equation} \label{TARKI88_6}
(x \cdot (z \cdot y)) \cdot u \overset{(\ref{TARKI88_1})}{=} x \cdot (u \cdot (z \cdot y))
\end{equation}
and finally from (\ref{TARKI88_5}) and (\ref{TARKI88_6}) we have
\begin{equation} \label{TARKI88_7}
x \cdot ((u \cdot z) \cdot y) = x \cdot (u \cdot (z \cdot y))
\end{equation}
Applied (\ref{(3)}) to the (\ref{TARKI88_7}) we obtain
\begin{equation}
x \backslash (x \cdot ((u \cdot z) \cdot y)) \overset{(\ref{(3)})}{=} (u \cdot z) \cdot y = x \backslash (x \cdot (u \cdot (z \cdot y))) \overset{(\ref{(3)})}{=} u \cdot (z \cdot y)
\end{equation}
As a result, we have
\begin{equation}
(u \cdot z) \cdot y = u \cdot (z \cdot y)
\end{equation}
\end{proof}

\begin{lemma} \label{TARKI_888}
If a left cancelation groupoid $(Q, \cdot , \backslash)$ satisfies the Tarki associative  law (\ref{(ASS_TARKI)}), then $x\cdot y = y \cdot x$ for all $x,y \in Q$.
\end{lemma}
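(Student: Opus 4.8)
The plan is to exploit the associativity already secured in Lemma~\ref{TARKI_88}: once the groupoid is known to be associative, commutativity drops out of a single application of left cancelation.

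First I would record the Tarki law (\ref{(ASS_TARKI)}) in the form
\begin{equation*}
x \cdot (z \cdot y) = (x \cdot y) \cdot z,
\end{equation*}
which is just (\ref{(ASS_TARKI)}) verbatim, and place alongside it the associativity identity (\ref{(5)}) supplied by Lemma~\ref{TARKI_88},
\begin{equation*}
(x \cdot y) \cdot z = x \cdot (y \cdot z).
\end{equation*}
The right-hand sides of these two displays share the expression $(x\cdot y)\cdot z$, so transitivity yields at once
\begin{equation*}
x \cdot (z \cdot y) = x \cdot (y \cdot z).
\end{equation*}

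At this stage both sides carry the common left factor $x$, so I would invoke the left cancelation hypothesis ($L_x$ injective, i.e. $x \cdot u = x \cdot v \Rightarrow u = v$) to strip $x$ away and conclude $z \cdot y = y \cdot z$ for all $y,z \in Q$, which is precisely the asserted commutativity.

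I do not expect a genuine obstacle here; the whole force of the argument is carried by Lemma~\ref{TARKI_88}. The only point deserving care is that associativity is what brings the two sides into a shape $x\cdot(\,\cdot\,)=x\cdot(\,\cdot\,)$ amenable to left cancelation: without it one would be pushed into the longer, division-based manipulations used for the earlier Tarki lemmas, where cancellable expressions had to be manufactured by substituting solved equations. Since here only left cancelation (and no division) is assumed, the reduction to a single cancelation step is exactly what keeps the proof short.
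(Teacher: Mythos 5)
Your argument is correct and follows essentially the same route as the paper: combine the Tarki identity (with $y$ and $z$ interchanged) with the associativity from Lemma~\ref{TARKI_88} to obtain $x\cdot(y\cdot z)=x\cdot(z\cdot y)$, then cancel $x$ on the left. The only cosmetic difference is that the paper performs the cancelation by applying the operation $\backslash$ and identity (\ref{(3)}) rather than citing injectivity of $L_x$ directly.
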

\begin{proof}
If we re-write Tarki identity in the form
$(x  y)  z = x  (z y)$ ($y\leftrightarrow z$) and take into consideration usual associative identity $(xy)z = x(yz)$, then
\begin{equation} \label{159}
x  (y  z) = x  (z  y)
\end{equation}
Further form (\ref{159}) we have
\begin{equation} \label{160}
x \backslash (x  (y  z)) = x \backslash (x  (z  y))
\end{equation}
and after applying to the both  sides of equality  (\ref{160}) identity \ref{(3)},
we have $y\cdot z = z\cdot y$.
\end{proof}

\begin{lemma} \label{TARKI_8811}
If a left cancelation right division groupoid $(Q, \cdot , \backslash, \slash )$ satisfies the Tarki associative  law (\ref{(ASS_TARKI)}), then  $(x \slash  x)  y = y$ for all $x, y \in Q$.
\end{lemma}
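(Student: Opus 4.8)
The plan is to reduce everything to the two structural facts already available for left cancelation groupoids with Tarki's law. First I would observe that a left cancelation right division groupoid is, in particular, a left cancelation groupoid, so Lemma~\ref{TARKI_88} applies and the operation is associative, while Lemma~\ref{TARKI_888} gives commutativity $x\cdot y = y\cdot x$. Thus $(Q,\cdot)$ is a commutative semigroup that is left cancellative and right divisible, and the whole argument can be carried out inside this comfortable setting.

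Next I would exploit right division to locate a local identity. Since $(Q,\cdot,/)$ is a right division groupoid, Theorem~\ref{ONE_IDEN_LEFT_DIVIS}(2) guarantees that identity~(\ref{(2)}), namely $(y/x)\cdot x = y$, holds; in particular the symbol $x/x$ is meaningful. Setting $y=x$ yields $(x/x)\cdot x = x$, and commutativity (Lemma~\ref{TARKI_888}) then gives $x\cdot (x/x) = x$. In other words, for the single element $x$ the element $x/x$ behaves as a two-sided identity.

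Finally I would promote this element-wise identity to an action on an arbitrary $y$ by cancelling on the left. Using associativity (Lemma~\ref{TARKI_88}) together with $x\cdot(x/x)=x$, one computes $x\cdot\bigl((x/x)\cdot y\bigr) = \bigl(x\cdot(x/x)\bigr)\cdot y = x\cdot y$. Applying left cancellation to the resulting equality $x\cdot\bigl((x/x)\cdot y\bigr) = x\cdot y$ — equivalently identity~(\ref{(3)}), or just injectivity of $L_x$ — then produces $(x/x)\cdot y = y$, which is exactly the claim.

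I expect no serious obstacle: once associativity and commutativity are in hand, the conclusion follows from a single left cancellation. The only points that need care are making sure Lemmas~\ref{TARKI_88} and~\ref{TARKI_888} are legitimately invoked (they are stated for left cancelation groupoids, and our hypotheses include left cancelation, so they transfer verbatim) and confirming that $/$ supplies a well-defined $x/x$ satisfying~(\ref{(2)}), which is guaranteed by the right division hypothesis.
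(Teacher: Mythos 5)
Your proof is correct, but it takes a noticeably more direct route than the paper's. The paper never cancels on the left at all: it first combines commutativity with identity (\ref{(2)}) to get $x\cdot(y/x)=y$, deduces $x\backslash y = y/x$ and $(x\cdot y)/x=y$, then derives the auxiliary identity $(y\cdot z)/x=(y/x)\cdot z$ and specializes $x=y$ to read off $z=(x/x)\cdot z$ from $(x\cdot z)/x = z$. You instead observe that $(x/x)\cdot x = x$ is an instance of (\ref{(2)}), upgrade it to $x\cdot(x/x)=x$ by commutativity (Lemma~\ref{TARKI_888}), use associativity (Lemma~\ref{TARKI_88}) to compute $x\cdot\bigl((x/x)\cdot y\bigr)=\bigl(x\cdot(x/x)\bigr)\cdot y = x\cdot y$, and finish with a single application of left cancellation. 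Both arguments rest on the same two structural lemmas (associativity and commutativity) and on identity (\ref{(2)}) supplied by the right division hypothesis; yours is shorter and exploits the left cancellation hypothesis head-on, whereas the paper's detour yields the extra identities $x\backslash y=y/x$ and $(y\cdot z)/x=(y/x)\cdot z$ as by-products, which have some independent interest but are not needed for this lemma. Your invocation of Lemmas~\ref{TARKI_88} and~\ref{TARKI_888} is legitimate, since both are stated for left cancelation groupoids and that hypothesis is present here.
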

\begin{proof}
From identities $xy=yx$ and $(x \slash y) y = x$ (identity \ref{(2)}) we have
\begin{equation} \label{137}
x(y\slash x) = y
\end{equation}
If we change in identity (\ref{(3)}) $x \rightarrow y\slash x$, then we obtain $x \backslash (x(y \slash x)) = y\slash x$. If we apply identity (\ref{137}) to the left side of the last equality,
then we obtain
\begin{equation} \label{138}
x\backslash y = y \slash x
\end{equation}
In identity of associativity $(xy)z = x(yz)$ we change $y$ by $y\slash x$ and obtain $(x(y\slash x))z = x((y\slash x)z)$. After applying to the left side of the last equality identity (\ref{137}) we have
\begin{equation} \label{140}
yz = x((y\slash x)z)
\end{equation}
If we change  in (\ref{138}) $y$ by $xy$, then  $x\backslash (xy) = (xy) \slash x$,
\begin{equation} \label{141}
y = (xy) \slash x
\end{equation}
 since $x\backslash (xy) \overset{(\ref{(3)})}{=} y$.
From (\ref{140}) we have $(yz)\slash x = (x((y\slash x)z))\slash x$. But
\begin{equation*}
(x((y\slash x)z))\slash x \overset{(\ref{141})}{=}(y\slash x)z
\end{equation*}
Therefore
\begin{equation} \label{142}
(yz)\slash x = (y\slash x)z
\end{equation}
If we put in (\ref{142}) $x=y$, then $(xz)\slash x = (x\slash x)z$, $z = (x\slash x)z$, since by
(\ref{141}) $(xz)\slash x = z$.
\end{proof}

\begin{lemma} \label{TARKI_882}
If a left cancelation right division groupoid $(Q, \cdot , \backslash, \slash )$  satisfies the Tarki associative  law (\ref{(ASS_TARKI)}), then $x\slash x = y \slash  y$ for all $x, y \in Q$.
\end{lemma}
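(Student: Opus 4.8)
The plan is to show that $x/x$ is actually a two-sided identity element of $(Q,\cdot)$ for every $x$, and then to conclude by the uniqueness of such an element. By Lemma~\ref{TARKI_8811} we already have $(x/x)\cdot z = z$ for all $x,z\in Q$, so every element of the form $x/x$ is a \emph{left} identity of the groupoid.

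First I would upgrade each such element to a two-sided identity. Since a left cancelation groupoid satisfying the Tarki law is commutative (Lemma~\ref{TARKI_888}), for every $z\in Q$ we get $z\cdot(x/x) = (x/x)\cdot z = z$, so $x/x$ is a right identity as well, hence a two-sided identity; the same reasoning applies to $y/y$. Then I would simply compare the two identities: reading the product $(x/x)\cdot(y/y)$ with $y/y$ as a right identity gives $(x/x)\cdot(y/y) = x/x$, whereas reading it with $x/x$ as a left identity gives $(x/x)\cdot(y/y) = y/y$. Hence $x/x = y/y$, as required.

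I expect no real obstacle: once Lemma~\ref{TARKI_8811} supplies a left identity and Lemma~\ref{TARKI_888} supplies commutativity, the conclusion is just the standard fact that a groupoid admits at most one two-sided identity. The only point needing a word of care is the passage from a left identity to a two-sided one, and that is precisely where commutativity enters. A more computational route, mirroring the proof of Lemma~\ref{TARKI_10}, would instead first establish the cancellation-type identity $(x\cdot y)/x = y$ (obtainable from identity~(\ref{(3)}) together with commutativity and the relation $x\backslash y = y/x$) and then apply it to $((y/y)\cdot x)/x$; but the identity-uniqueness argument above is shorter and avoids that detour.
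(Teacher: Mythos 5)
Your proposal is correct and follows essentially the same route as the paper: both take the left identity $(x/x)\cdot z = z$ from Lemma~\ref{TARKI_8811}, use commutativity (Lemma~\ref{TARKI_888}) to make it two-sided, and then evaluate the product of two such elements in two ways to conclude $x/x = y/y$. The paper just phrases this as substituting $y \to y/y$ in $y\cdot(x/x)=y$ rather than invoking uniqueness of the identity explicitly, which is the same computation.
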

\begin{proof}
Since by Lemma \ref{TARKI_888} the groupoid $(Q, \cdot , \backslash, \slash )$ is commutative, then from 
  $(x \slash  x)  y = y$ we have $y (x \slash  x)   = y$. 
  
  If in the last equality we change $y$ by $y/y$, then we have $$(y/y)\cdot (x \slash  x)   = y/y$$
      Since the element $y/y$ is left identity element, then we have $x \slash  x   = y/y$.
\end{proof}

\begin{theorem} \label{TARKI_THEOR_9}
If a right division left cancelation groupoid $(Q, \cdot, \backslash, /)$ satisfies the Tarki associative  law (\ref{(ASS_TARKI)}), then it is a commutative group relative to the operation $\cdot$ and it satisfies any from identities (\ref{(6)}) -- ( \ref{(ASS_CIRCLIC)}).
\end{theorem}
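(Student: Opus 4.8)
The plan is to assemble the theorem from the four Tarski lemmas already established for this very signature and then to recognize the resulting structure as an abelian group. First I would invoke Lemma~\ref{TARKI_88}: since $(Q,\cdot)$ is a left cancelation groupoid satisfying the Tarski law, it is associative, so (\ref{(5)}) holds. With associativity in hand, Lemma~\ref{TARKI_888} applies and yields commutativity, $x\cdot y = y\cdot x$. Thus $(Q,\cdot)$ is already a commutative semigroup, and the two hypotheses of those lemmas are met because our groupoid is in particular left cancelation.

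Next I would produce the identity element. Because the groupoid is simultaneously left cancelation and right division, Lemmas~\ref{TARKI_8811} and \ref{TARKI_882} are available: the former gives $(x/x)\cdot y = y$, so each element $x/x$ acts as a left identity, while the latter gives $x/x = y/y$, so this left identity is unique. Denote it by $e$. By the commutativity just established, $e$ is then a two-sided identity element of $(Q,\cdot)$.

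The final step upgrades the commutative monoid to a group, and here the one point requiring care is the passage from the one-sided hypotheses to the full quasigroup property. The key observation is that commutativity identifies the translations, $R_x = L_x$ for every $x$: hence right division ($R_x$ surjective) forces $L_x$ surjective as well, making $(Q,\cdot)$ a division groupoid, and left cancelation ($L_a$ injective) forces $R_a$ injective, making $(Q,\cdot)$ a cancelation groupoid. Therefore every translation is bijective and $(Q,\cdot)$ is a quasigroup; an associative quasigroup with an identity element is a group, and being commutative it is an abelian group. This mirrors the argument used for Theorems~\ref{TARKI_THEOR_1} and \ref{TARKI_THEOR_2}.

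I do not expect a genuine obstacle, since all the computational weight sits in the cited lemmas; the only nontrivial inference is this reduction from one-sided division and cancelation to the quasigroup property, which rests entirely on commutativity. To close, I would note that any abelian group validates each of the rearrangement identities (\ref{(6)})--(\ref{(ASS_CIRCLIC)}), because each of them reduces to a commutative--associative reordering of one and the same product, which establishes the last assertion of the theorem.
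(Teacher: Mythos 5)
Your proposal is correct and follows essentially the same route as the paper, whose proof of Theorem~\ref{TARKI_THEOR_9} is simply the one-line citation of the same four lemmas (\ref{TARKI_88}, \ref{TARKI_888}, \ref{TARKI_8811}, \ref{TARKI_882}). You merely make explicit the final assembly step (commutativity turning the one-sided division and cancelation hypotheses into the full quasigroup property) that the paper leaves implicit.
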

\begin{proof}
The proof follows from Lemmas \ref{TARKI_88},  \ref{TARKI_888},  \ref{TARKI_8811} and  \ref{TARKI_882}.
\end{proof}

\noindent
\footnotesize{Institute of Mathematics and Computer Science \\
Academy of Sciences of Moldova\\
5 Academiei str.\\
  Chi\c{s}in\u{a}u  MD$-$2028 \\
 Moldova\\
e-mail: dmitry.pushkashu@gmail.com }

\end{document}